\DeclareMathAlphabet\mathbfcal{OMS}{cmsy}{b}{n}
\newcommand{\norm}[1]{\left\lVert#1\right\rVert}
\newcommand{\abs}[1]{\left|#1\right|}
\newcommand{\boldsymb}{ }
\renewcommand{\H}{ H}
\renewcommand{\Re}{\mathrm{Re}\,}
\newtheorem{lemma}{Lemma}
\newtheorem{remark}{Remark}
\newtheorem{proposition}{Proposition}
\newtheorem{theorem}{Theorem}
\newcommand{\tot}{^\text{tot}}
\newcommand{\inc}{^\text{inc}}
\newcommand{\cqb}{\boldsymb b}
\newcommand{\cqg}{\boldsymb g}
\newcommand{\cqC}{\boldsymb C}
\newcommand{\cqK}{\boldsymb K}
\newcommand{\cqH}{\boldsymb H}
\newcommand{\cqL}{\boldsymb L}
\newcommand{\cqW}{\boldsymb W}
\newcommand{\cqX}{\boldsymb V}
\newcommand{\cqY}{\boldsymb W}
\newcommand{\vtn}{\underline{t_n}}
\newcommand{\bone}{\mathbbm{1}}
\newcommand{\wu}{\widehat{u}}
\newcommand{\wg}{\widehat{g}}
\newcommand{\Obs}{\mathcal O}
\title[A posteriori estimates and mesh refinements for scattering problems]{A posteriori error estimates and space-adaptive mesh refinements for time-dependent scattering problems}
\author{Th\'{e}ophile Chaumont-Frelet}
\address{Inria Univ.~Lille and Laboratoire Paul Painlev\'{e}, 59655 Villeneuve-d’Ascq, France.} 
\email{theophile.chaumont@inria.fr}
\author{Heiko Gimperlein}
\address{Engineering Mathematics, University of Innsbruck, 6020 Innsbruck,  Austria.}
\email{heiko.gimperlein@uibk.ac.at}
\author{Ignacio Labarca-Figueroa}
\address{Institute for Mathematical and Computational Engineering, School of Engineering and Faculty of Mathematics, Pontificia Universidad Cat\'{o}lica de Chile, Santiago, Chile, and \newline\indent Engineering Mathematics, University of Innsbruck, 6020 Innsbruck,  Austria.}
\email{ignacio.labarca@uc.cl}
\author{ Jörg Nick}
\address{ETH Zürich,	Seminar for Applied Mathematics,
	 Rämistrasse 101, CH-8092 Zürich, Switzerland. %
}
\email{joerg.nick@math.ethz.ch}
\begin{document}
\maketitle
\begin{abstract}
This work studies a posteriori error estimates and their use for time-dependent acoustic scattering problems, formulated as a time-dependent boundary integral equation based on a single-layer ansatz. The integral equation is discretized by the convolution quadrature method in time and by boundary elements in space. We prove the reliability of an error estimator of residual type and study the resulting space-adaptive mesh refinements. Moreover, we present a simple modification of the convolution quadrature method based on temporal shifts, which recovers, for the boundary densities, the full classical temporal convergence order $2m-1$ of the temporal convolution quadrature method based on the $m$-stage convolution quadrature semi-discretization. We numerically observe that the adaptive scheme yields asymptotically optimal meshes for an acoustic scattering problem in two dimensions.
\end{abstract}

\section{Introduction and problem formulation}

The computational treatment of time-domain scattering problems by formulations based on boundary integral equations is by now a classical approach. Provably convergent numerical methods of those formulations mostly fall into two categories and either discretize the time-domain fundamental solution via a Galerkin approach (referred to as space-time Galerkin), or rely on underlying time stepping schemes to approximate inverse Laplace transforms, which leads to convolution quadrature methods. Galerkin schemes with adaptive local mesh refinements in space, in time or in space-time have been investigated recently, e.g., in \cite{aimiw,aimispacetime,goss20,Glaefke,hoonhout2023,SV16,zank2020inf,zank1d}. Beyond boundary integral equations, similar spatially adaptive methods for the wave equation based on a finite element discretization in space and time stepping schemes or the time-continuous problem in time have been considered in  \cite{cf23,chaumont2025damped}.
In this paper, we initiate the study of a posteriori error estimates and the resulting adaptive mesh refinement procedures based on convolution quadrature methods. For these methods, Laplace domain operators have to be approximated at a large number of wavenumbers. The computational bottleneck of these schemes is the assembly and storage of the discretized Laplace domain boundary integral operators, most commonly in the form of boundary element matrices. 

Different approaches have been considered to reduce the number of necessary frequencies at which these matrices have to be approximated. Convolution quadrature schemes that use variable time step sizes have been introduced in \cite{LS13,LS15,ss} and were further used to derive a temporally adaptive scheme in \cite{SV16}. Other approaches to reduce the computational cost in the context of wave-types problems derive higher-order methods \cite{BLM11,BF24,LS16} or even or even $p$-versions \cite{rieder25}, or alternatively reduce the number of necessary evaluations through specialized quadrature for the approximation of highly oscillatory integrals \cite{BLS17,MN24}. These variants of convolution quadrature methods reduce the number of frequencies at which the Laplace domain operators have to be approximated, often at the price of some computational overhead. 

Unlike for space-time Galerkin methods, improved spatial discretizations, however, have attracted less attention. Efforts on improving the efficiency include compression schemes \cite{BK14,BLS21} or wavelet based discretizations \cite{DF20}. Based on the analysis by Plamenevskii and collaborators \cite{korikov2021asymptotic}, it is known that for scattering problems in polygonal domains quasi-optimal approximations of the solution are achieved by time-independent graded meshes in space \cite{muller2015}. These results were later extended to further problems with geometric singularities and to approximations by $hp$ methods \cite{aimi2023,bansal2021,graded,hp,luong2015,muller2016}.

\subsection*{Our contribution and outline.}
Reducing the number of necessary spatial degrees of freedom to compute approximations of a desired accuracy is therefore a key opportunity to decrease both the runtime and the memory requirements of fully discrete schemes based on the combination of the convolution quadrature method with boundary elements. In this paper, we provide an a posteriori error analysis of the error introduced by the spatial boundary element discretization applied to time-dependent and time-discrete boundary integral equations. We derive error indicators that are, up to a data oscillation term, equivalent to the error. These bounds are formulated in Theorems~\ref{th:full}--\ref{thm:full-no-derivatives}, the main theoretical results in this article. Based on these error indicators, we derive a spatially adaptive scheme that is numerically shown to produce grids that recover the optimal rate of convergence with respect to the spatial degrees of freedom. This numerical evidence indicates that the proposed procedure produces quasi-optimal time-independent meshes \cite{graded,muller2015}, although a proof of quasi-optimality of the generated meshes is beyond the scope of this paper and left for future research.

\subsection{Problem formulation}
  Let $\Omega\subset \mathbb R^d$, $d=2,3,$ be the exterior of a bounded Lipschitz domain or of a Lipschitz screen. On this exterior domain, we impose the acoustic wave equation
\begin{align}\label{eq:acoustic}
	\partial_t^2 u\tot - \Delta u\tot  = 0, \quad \text{in} \quad \Omega^+.
\end{align}
Homogeneous Dirichlet boundary conditions are imposed along the boundary of the scatterers $\Omega$,
\begin{align}
	u\tot = 0,	\quad \text{on} \quad \Gamma = \partial \Omega .
\end{align}
Initially, the total wave is given by an incoming wave $u\inc$ with support away from the boundaries of the scatterers.  The objective of the scattering problem is then to compute the unknown scattered wave $u$, which initially vanishes, such that the total wave $u\tot = u+u\inc$ fulfills the above boundary value problem.

\section{The Laplace domain problem}
\subsection{Boundary integral formulation in the Laplace domain}
This section describes the setting of the Laplace domain problem, for some fixed frequency $s\in\mathbb C$ with positive real part. 

Applying the Laplace transform to the acoustic wave equation reveals the Helmholtz problem, which reads for $s\in\mathbb C$ with $\Re s >0$
\begin{alignat}{2}\label{eq:Helmholtz}
	s^2 \wu - \Delta \wu  &= 0, \quad &&\text{in} \quad \mathbb R^d \setminus \Obs,
	\\ \wu &= -\wg\inc,	\quad &&\text{on} \quad \Gamma= \partial \Obs.
\end{alignat}
For $|x|\rightarrow \infty$, an asymptotic condition is further imposed, that ensures the square integrability of $\wu$. 
The fundamental solutions of the Helmholtz problem in two  and three dimensions are given by 
\begin{align*}
    G(s,x) = \dfrac{i}{4} H_0^{(1)}(isr), \quad x \in \mathbb R^2,\quad \text{ resp. } \quad  G(s,x)=\dfrac{e^{-s|x|}}{4\pi|x|},\quad x \in \mathbb R^3,
\end{align*}
in terms of the Hankel function $H_0^{(1)}$. We reformulate this problem as a boundary integral equation, fully formulated on the boundary of the scattering obstacle $\partial \Obs$.
The potential operator reads
\begin{align*}
	S(s)\varphi (x) = \int_{\Gamma} G(s,x-y) \varphi(y), \quad x\in \mathbb R^d\setminus \Obs.
\end{align*}
Taking the trace towards the boundary $x\rightarrow \Gamma$, which gives the single-layer boundary operator, which reads
\begin{align*}
	V(s)\varphi (x) = \int_{\Gamma} G(s,x-y) \varphi(y), \quad x\in \Gamma.
\end{align*}
The solution of the above scattering problem \eqref{eq:Helmholtz} is then determined by
\begin{align*}
	\wu = -S(s)V(s)^{-1} \wg\inc.
\end{align*}
The boundary operators fulfill $s$-explicit bounds, which are described in the following Lemma.
\begin{lemma}\cite[Proposition~3]{B86I}
 The single-layer boundary operator is bounded by
\begin{align*}
	\left\| V(s) \right\|_{ H^{1/2}(\Gamma)\leftarrow \widetilde{H}^{-1/2}(\Gamma)}&\le C_\Gamma \dfrac{ |s|^2 + 1}{\mathrm{Re}\, s},
    \\
    	\left\| V(s)^{-1} \right\|_{\widetilde{H}^{-1/2}(\Gamma)\leftarrow H^{1/2}(\Gamma)}&\le C_\Gamma\dfrac{ |s|^2 + 1}{\Re s},
\end{align*}
where $C$ only depends on the geometry.
\end{lemma}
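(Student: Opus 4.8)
This is Proposition~3 of \cite{B86I}, and the natural route is the coercivity argument of Bamberger and Ha~Duong, which lifts $V(s)$ to the full--space Helmholtz problem. I would first record that, for $\varphi\in\widetilde{H}^{-1/2}(\Gamma)$, the potential $w:=S(s)\varphi$ lies in $H^1(\mathbb R^d\setminus\Gamma)$, decays at infinity, solves $s^2w-\Delta w=0$ in $\mathbb R^d\setminus\Gamma$, has two--sided Dirichlet trace $\gamma w=V(s)\varphi$ on $\Gamma$, and satisfies the conormal jump relation $[\partial_\nu w]=-\varphi$. Testing the equation against $\overline{v}$ and integrating by parts over $\mathbb R^d$ gives the variational identity
\begin{equation*}
	\int_{\mathbb R^d}\big(\nabla w\cdot\overline{\nabla v}+s^2\,w\,\overline{v}\big)=\langle\varphi,\overline{\gamma v}\rangle_\Gamma,\qquad v\in H^1(\mathbb R^d),
\end{equation*}
which, for $v=S(s)\psi$, expresses the $\Gamma$--pairing between $V(s)\varphi$ and $\psi$ through the full--space sesquilinear form evaluated at $w$ and $S(s)\psi$.

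For the boundedness of $V(s)$ I would choose the test field $v=s\,w$ in the identity and take real parts; the bad sign of $s^2$ disappears and one is left with
\begin{equation*}
	\Re s\,\big(\|\nabla w\|_{L^2(\mathbb R^d)}^2+|s|^2\|w\|_{L^2(\mathbb R^d)}^2\big)=\Re\!\big(\overline{s}\,\langle\varphi,\overline{\gamma w}\rangle_\Gamma\big)\le|s|\,\|\varphi\|_{\widetilde{H}^{-1/2}(\Gamma)}\,\|\gamma w\|_{H^{1/2}(\Gamma)}.
\end{equation*}
Bounding $\|\gamma w\|_{H^{1/2}(\Gamma)}$ in terms of the $s$--weighted energy $\big(\|\nabla w\|_{L^2}^2+|s|^2\|w\|_{L^2}^2\big)^{1/2}$ through a scaled trace inequality (the $s$--dependence of that inequality being the source of the $|s|^2+1$ in the numerator) yields an $s$--explicit bound on the energy of $w$ by $\|\varphi\|_{\widetilde{H}^{-1/2}(\Gamma)}$; a second application of the same trace inequality then bounds $\|V(s)\varphi\|_{H^{1/2}(\Gamma)}=\|\gamma w\|_{H^{1/2}(\Gamma)}$, which is the first estimate.

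For $V(s)^{-1}$ I would instead exploit the identity with $\psi=\varphi$ and no extra factor of $s$: writing $a:=\|\nabla w\|_{L^2}^2$ and $b:=\|w\|_{L^2}^2$, the elementary bound $|a+s^2b|\ge\tfrac{\Re s}{|s|}(a+|s|^2b)$ (proved by factoring out $s$ and taking real parts, using $a,b\ge0$ and $\Re s>0$) gives
\begin{equation*}
	\big|\langle V(s)\varphi,\overline{\varphi}\rangle_\Gamma\big|=\Big|\int_{\mathbb R^d}\big(|\nabla w|^2+s^2|w|^2\big)\Big|\ge\frac{\Re s}{|s|}\,\big(\|\nabla w\|_{L^2}^2+|s|^2\|w\|_{L^2}^2\big).
\end{equation*}
Combining this with $\big|\langle V(s)\varphi,\overline{\varphi}\rangle_\Gamma\big|\le\|V(s)\varphi\|_{H^{1/2}(\Gamma)}\|\varphi\|_{\widetilde{H}^{-1/2}(\Gamma)}$ and with a scaled conormal--trace estimate bounding $\|\varphi\|_{\widetilde{H}^{-1/2}(\Gamma)}=\|[\partial_\nu w]\|_{\widetilde{H}^{-1/2}(\Gamma)}$ by the $s$--weighted energy of $w$ (this is where $\Delta w=s^2w$ and the remaining powers of $|s|$ enter) yields $\|\varphi\|_{\widetilde{H}^{-1/2}(\Gamma)}\le C_\Gamma\tfrac{|s|^2+1}{\Re s}\,\|V(s)\varphi\|_{H^{1/2}(\Gamma)}$. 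Since the volume form is coercive and continuous, $V(s)$ is an isomorphism onto $H^{1/2}(\Gamma)$ (Lax--Milgram, or a Fredholm argument), so this last inequality is precisely the claimed bound on $\|V(s)^{-1}\|$.

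The conceptual content is thin; the work is entirely in the $s$--explicit bookkeeping. All of the trace, inverse--trace and conormal--trace inequalities have to be controlled uniformly over $\Re s>0$, with the right behaviour as $|s|\to0$ and as $|s|\to\infty$, and the passage from the natural $s$--weighted volume norm $\|\nabla\cdot\|_{L^2}^2+|s|^2\|\cdot\|_{L^2}^2$ to the fixed trace spaces $H^{1/2}(\Gamma)$ and $\widetilde{H}^{-1/2}(\Gamma)$ is exactly where the (non--sharp) factor $|s|^2+1$ appears. One must also justify the lifting rigorously --- well--posedness of the full--space problem, the mapping property $S(s)\colon\widetilde{H}^{-1/2}(\Gamma)\to H^1(\mathbb R^d\setminus\Gamma)$, and the jump relations --- and check that nothing changes when $\Gamma$ is an open Lipschitz screen, in which case $\mathbb R^d\setminus\Gamma$ is connected and $\widetilde{H}^{-1/2}(\Gamma)$ is the space of admissible conormal jumps. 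Assembling these ingredients is precisely what is carried out in \cite{B86I}.
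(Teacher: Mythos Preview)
The paper does not give its own proof of this lemma: it simply cites \cite[Proposition~3]{B86I} and points to \cite[Lemmas~2.7 \& 2.8]{N23} for an explicit constant. Your outline is exactly the classical Bamberger--Ha~Duong argument behind that citation---lifting to the transmission problem, testing with $\bar s\,w$ to extract the $s$-weighted energy, and combining the resulting coercivity with scaled trace and conormal-trace inequalities---so there is nothing to compare; you have supplied what the paper chose to omit.
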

An explicit estimate on the constant $C_\Gamma$ in terms of the norms of trace operators with a proof is found in \cite[Lemmas 2.7 \& 2.8]{N23}.

\subsection{Boundary element discretization} The weak formulation of the boundary integral equation determining $\widehat\varphi$ with test functions in the appropriate test space $\widetilde{H}^{-1/2}(\Gamma)$ yields the following weak formulation: Find  $\widehat \varphi \in  \widetilde{H}^{-1/2}(\Gamma)$, such that for all $\widehat{v} \in  \widetilde{H}^{-1/2}(\Gamma)$, 
\begin{align*}
	\left\langle\widehat{v}, V(s) \widehat{\varphi}\right\rangle_\Gamma 
	& = 
	-\left\langle\widehat{v}, \widehat u\inc  \right\rangle_\Gamma.
\end{align*}
Consider the space of continuous piecewise constant elements $X_h\subset \widetilde{H}^{-1/2}(\Gamma)$. On this finite dimensional subspace, we formulate the Galerkin approximation: find $\widehat \varphi_h \in X_h \subset \widetilde{H}^{-1/2}(\Gamma)$, such that for all $\widehat{v}_h \in X_h \subset \widetilde{H}^{-1/2}(\Gamma)$
\begin{align*}
	\left\langle\widehat{v}_h, V(s) \widehat{\varphi}_h \right\rangle_\Gamma 
	& = -
	\left\langle\widehat{v}_h, \widehat u\inc  \right\rangle_\Gamma.
\end{align*}
Here, $ \left\langle\cdot,\cdot \right\rangle_\Gamma$ denotes the extension of the $L^2$-pairing onto the $\widetilde{H}^{-1/2}(\Gamma)\times H^{1/2}(\Gamma)$.
Due to the positive real part of $s$, the bilinear form of the right-hand side is coercive, which enables a classical a priori error analysis. Using the Lax-Milgram Lemma on the subspace $X_h$ shows that the spatially discrete resolvent $V_h(s)^{-1}$, defined by
\begin{align*}
\widehat\varphi_h = -V_h(s)^{-1}\widehat u\inc .
\end{align*}
inherits the Laplace domain bounds of the spatially continuous single-layer operator, in particular
\begin{align}\label{eq:V_h-inv}
\left\| V_h(s)^{-1} \right\|_{\widetilde{H}^{-1/2}(\Gamma)\leftarrow H^{1/2}(\Gamma)}&\le C_\Gamma\dfrac{ |s|^2 + 1}{\Re s}   .
\end{align}

\subsection{A posteriori error estimate}
We then find the following time-harmonic a posteriori estimate.
\begin{align}
\begin{aligned}\label{eq:time-harmonic-a-posteriori}
\left\|\widehat \varphi - \widehat \varphi_h \right\|_{\widetilde{H}^{-1/2}(\Gamma)}
& =
\left\|V(s)^{-1}\left(V(s) \widehat \varphi - V(s)\widehat \varphi_h \right)\right\|_{\widetilde{H}^{-1/2}(\Gamma)}
\\ & \le 
C_\Gamma \dfrac{ |s|^2 + 1}{\Re s}
\left\|\widehat f- V(s)\widehat \varphi_h \right\|_{H^{1/2}(\Gamma)}.
\end{aligned}
\end{align}
Let $\widehat{R}_h= \widehat f- V(s)\widehat \varphi_h\in H^{1/2}(\Gamma) $ denote the time-harmonic residual. We note that by following the argument structure above, we further obtain 
\begin{align}
\begin{aligned}\label{eq:time-harmonic-a-posteriori2}
\left\|R_h(s)\right\|_{H^{1/2}(\Gamma)}
& =
\left\|V(s)\left(V(s)^{-1} \widehat f -\widehat \varphi_h \right)\right\|_{H^{1/2}(\Gamma)}
\\ & \le C_\Gamma \dfrac{ |s|^2 + 1}{\Re s}
\left\|\widehat \varphi - \widehat \varphi_h \right\|_{\widetilde{H}^{-1/2}(\Gamma)}.
\end{aligned}
\end{align}
Overall, we therefore arrive at the following equivalence of the norms of the error and of the residual, which holds up to a factor depending on the Laplace parameter $s$
\begin{align}
\dfrac{\Re s}{C_\Gamma (|s|^2 + 1)}\left\|R_h(s)\right\|_{H^{1/2}(\Gamma)} 
\le 
\left\|\widehat \varphi - \widehat \varphi_h \right\|_{\widetilde{H}^{-1/2}(\Gamma)}
\le 
C_\Gamma \dfrac{ |s|^2 + 1}{\Re s}\left\|R_h(s)\right\|_{H^{1/2}(\Gamma)}.
\end{align}

\section{The time-dependent scattering problem}
\subsection{Temporal convolution operators}\label{sect:temporal_convolution}
We describe the techniques to transfer the Laplace domain results to the time-domain. 
Let $V$ be an Hilbert space and consider an analytic family of bounded linear operators $K(s)\colon V \rightarrow W$, which are defined for $\Re s>0$. We assume that $K$ is \textit{polynomially bounded} with respect to the Laplace domain parameter $s$, in the following sense: There exist a real $\kappa$, $\nu\ge 0$ and, for every $\sigma >0$, there exists $M_\sigma <\infty$, such that
\begin{align}\label{eq:pol_bound}
\norm{K(s)}_{W\leftarrow V}&\leq M_\sigma \dfrac{\abs{s}^\kappa}{(\Re s)\nu}, \quad\ \text{ Re } s \ge \sigma>0.
\end{align}
For a sufficiently regular function $g:[0,T]\to X$, which vanishes together with its first $m>\kappa$ derivatives, we write
\begin{equation} \label{Heaviside}
K(\partial_t) g = (\mathcal{L}^{-1}K) * g
\end{equation}
for the temporal convolution of the inverse Laplace transform of $K$ with~$g$, where $g$ is extended by zero on the negative real axis. 
Due to the associativity of convolution we obtain, for two such families of operators $K(s)$ and $L(s)$ mapping into compatible spaces, the composition rule
\begin{equation}\label{comp-rule}
K(\partial_t)L(\partial_t)g = (K L)(\partial_t)g.
\end{equation}
We denote the Sobolev space of real   order $r$ of $V$-valued functions on $\mathbb R$ by $H^r(\mathbb R,V)$ and, on finite intervals $(0, T )$,
we write
$$
H_0^r(0,T; V) = \{ g|_{(0,T)} \,:\, g \in H^r(\mathbb R, V)\ \text{ with }\ g = 0 \ \text{ on }\ (-\infty,0)\} .
$$
The natural norm on $H_0^r(0,T;V)$ is, for $r\ge 0$, equivalent to the norm $\| \partial_t^r g \|_{L^2(0,T;V)}$.
By the Plancherel formula, the temporal convolution operators $K(\partial_t)$ are bounded on the appropriate temporal Sobolev spaces. \cite[Lemma 2.1]{L94}:
If $K(s)$ is bounded by \eqref{eq:pol_bound} in the half-plane $\text{Re }s > 0$, then $K(\partial_t)$ fulfills
\begin{equation}\label{sobolev-bound}
\| K(\partial_t) \|_{ \H^{r}_0(0,T;V') \leftarrow \H^{r+\kappa}_0(0,T;V)} \le e M_{1/ T} T^\nu
\end{equation}
for arbitrary real $r$. (The bound on the right-hand side arises from the bound $e^{\sigma T} M_\sigma$ on choosing $\sigma=1/T$.)
We note that for any integer $k\ge 0$ and real ${\alpha>\tfrac12}$, we have the continuous embedding $\H^{k+\alpha}_0(0,T;V)\subset C^k([0,T];V)$.
 
\subsection{Time-dependent boundary integral equations}
We turn towards the time-dependent scattering problem, whose solution is determined by the boundary integral equation
\begin{align*}
V(\partial_t)\varphi = - u\inc .
\end{align*}
The scattered wave can be evaluated,  in the exterior domain $\mathbb R^3\setminus \Obs$, by the representation formula
\begin{align*}
	u = S(\partial_t)\varphi .
\end{align*}
By \eqref{sobolev-bound} and Laplace domain bounds, we obtain bounds on the (temporal) regularity of the scattered wave $u$, under assumptions on the incoming wave $u\inc$.
\subsection{Boundary element discretization}
The spatially discrete time-dependent boundary density is constructed by employing the Galerkin approach on the temporal convolution equation, which yields the following formulation:\\
Find the spatially discrete boundary density $ \varphi_h(t) \in  X_h$ (for $t\in [0,T]$), such that 
\begin{align*}
	\left\langle v_h, V(\partial_t)\varphi_h \right\rangle_\Gamma 
	& = 
	-\left\langle v_h, \widehat u\inc  \right\rangle_\Gamma
    \quad \text{for all}\quad  v_h \in  X_h.
\end{align*}
An equivalent and compact characterization of this approximation can be formulated with the temporal convolution operator associated to $V_h(s)^{-1}$, namely we have
\begin{align*}
\varphi_h = - V_h(\partial_t)^{-1}u\inc .
\end{align*}
We then have the following a posteriori error estimate of the spatially discrete system.
\begin{proposition}\label{prop:semi-discrete-space}
Let $\omega>0$ be an arbitrary constant. Then, the error of the spatial-semidiscretization is bounded, for all $r\in \mathbb N$, by
\begin{align*}
\int_0^\infty \left \|\varphi-\varphi_h \right\|_{L^2(0,T; \widetilde{H}^{-1/2}(\Gamma))}
&\le
C \big(1+\omega^2\big)\left\|f- V(\partial_t) \varphi_h \right\|_{L^2(0,T;H^{1/2}(\Gamma))}
\\&
+\dfrac{C}{\omega^r} 
\left\|f \right\|_{H^{r+6}_0(0,T;H^{1/2}(\Gamma))},
\end{align*}
if the right-hand side is finite.
\end{proposition}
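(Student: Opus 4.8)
The plan is to reproduce the time-harmonic computation \eqref{eq:time-harmonic-a-posteriori} at the level of temporal convolution operators, and to repair the resulting loss of temporal derivatives by a frequency splitting governed by $\omega$. First I would record the two identities on which everything rests. Since $V(\partial_t)\varphi=f$, the composition rule \eqref{comp-rule} gives $\varphi-\varphi_h=V(\partial_t)^{-1}R_h$ with the residual $R_h:=f-V(\partial_t)\varphi_h$; and since $\varphi_h=V_h(\partial_t)^{-1}f$, the same rule yields the alternative representation $R_h=(\Id-V(\partial_t)V_h(\partial_t)^{-1})f$, which expresses the residual through the data alone. The first identity will feed the residual term, the second the data term. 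Throughout, the assumption that the right-hand side is finite, together with the mapping property \eqref{sobolev-bound}, ensures that all convolution operators below act between the claimed temporal Sobolev spaces and that every manipulation is legitimate. (I read the left-hand side of the statement as the $L^2(0,T;\widetilde{H}^{-1/2}(\Gamma))$-error.)

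The heart of the argument is the choice of a scalar frequency multiplier $M_\omega$. I would use the rational function $M_\omega(s)=P(s/\omega)\,(1+s/\omega)^{-(r+1)}$, where $P$ is the Taylor polynomial of degree $r-1$ at $z=0$ of $z\mapsto(1+z)^{r+1}$ (taking $P\equiv 0$, hence $M_\omega\equiv 0$, when $r=0$). It is analytic and bounded on $\Re s\ge 0$, decays like $|s|^{-2}$ at infinity, satisfies $M_\omega(0)=1$, and $1-M_\omega$ vanishes to order $r$ at the origin because $P$ matches the first $r$ Taylor coefficients of $(1+z)^{r+1}$. Separating the cases $|s|\le\omega$ and $|s|>\omega$, one then checks the two scaling estimates
\begin{align*}
(|s|^2+1)\,|M_\omega(s)| &\le C_r\,(1+\omega^2), \\
|1-M_\omega(s)| &\le C_r\,\omega^{-r}\,|s|^r, \qquad \Re s\ge 0 ,
\end{align*}
with a constant depending only on $r$. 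Writing $\Id=M_\omega(\partial_t)+(\Id-M_\omega(\partial_t))$ then splits $\varphi-\varphi_h$ into a low-frequency part $V(\partial_t)^{-1}M_\omega(\partial_t)R_h$ and a high-frequency part $V(\partial_t)^{-1}(\Id-M_\omega(\partial_t))R_h$.

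For the low-frequency part I would combine the Laplace-domain bound for $V(s)^{-1}$ with the first scaling estimate: the family $V(s)^{-1}M_\omega(s)\colon H^{1/2}(\Gamma)\to\widetilde{H}^{-1/2}(\Gamma)$ is polynomially bounded in the sense of \eqref{eq:pol_bound} with $\kappa=0$, $\nu=1$, and a constant $\lesssim(1+\omega^2)$ that is uniform in $\sigma$; hence \eqref{sobolev-bound}, used with temporal index $0$, gives $\|V(\partial_t)^{-1}M_\omega(\partial_t)R_h\|_{L^2(0,T;\widetilde{H}^{-1/2})}\lesssim T(1+\omega^2)\,\|R_h\|_{L^2(0,T;H^{1/2})}$. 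For the high-frequency part I would substitute $R_h=(\Id-V(\partial_t)V_h(\partial_t)^{-1})f$ and fold everything into one convolution operator $\big[V(\partial_t)^{-1}(\Id-M_\omega(\partial_t))(\Id-V(\partial_t)V_h(\partial_t)^{-1})\big]f$ via \eqref{comp-rule}. Here $V(s)^{-1}$ contributes a factor $|s|^2/\Re s$, the factor $\Id-V(s)V_h(s)^{-1}$ contributes $(|s|^2+1)^2/(\Re s)^2$ (Laplace bound for $V(s)$ together with \eqref{eq:V_h-inv}), and $\Id-M_\omega(s)$ contributes $C_r\,\omega^{-r}|s|^r$ by the second scaling estimate, so the composed family is polynomially bounded with $\kappa=r+6$, $\nu=3$, and constant $\lesssim \omega^{-r}$ (the powers of $1/\Re s$ being harmless once $\sigma=1/T$ is fixed); \eqref{sobolev-bound} with temporal index $0$ then gives $\|V(\partial_t)^{-1}(\Id-M_\omega(\partial_t))R_h\|_{L^2(0,T;\widetilde{H}^{-1/2})}\lesssim \omega^{-r}\|f\|_{H^{r+6}_0(0,T;H^{1/2})}$. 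Adding the two parts and absorbing the $T$-powers and the $r$-dependence into the constant gives the asserted bound.

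I expect the main obstacle to be the construction of $M_\omega$ with its two competing requirements --- it must damp frequencies above $\omega$ at most quadratically, so that only the factor $1+\omega^2$ and not a higher power is lost against $\|R_h\|_{L^2}$, while its complement $1-M_\omega$ must vanish to the prescribed order $r$ at $s=0$ and stay bounded on the whole half-plane --- and then the uniform verification of the two displayed scaling bounds via the case distinction $|s|\le\omega$ versus $|s|>\omega$. Everything afterwards is bookkeeping with the composition rule \eqref{comp-rule} and the polynomial-bound calculus of \eqref{eq:pol_bound}--\eqref{sobolev-bound}; in particular the offset $6=2+4$ in the temporal regularity index of the data term is simply the sum of the orders $\kappa=2$ of $V(\partial_t)^{-1}$ and $\kappa=4$ of $\Id-V(\partial_t)V_h(\partial_t)^{-1}$.
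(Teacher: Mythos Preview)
Your argument is correct and shares the paper's overall strategy---split the error into a low-frequency part controlled by the residual and a high-frequency part controlled by the data via $R_h=(\Id-V(\partial_t)V_h(\partial_t)^{-1})f$---but the execution is genuinely different. The paper passes to the weighted space $L^2(\mathbb{R}_+,e^{-2\sigma t}dt)$, applies Plancherel, and splits the contour integral at $|s|=\omega$ with the \emph{sharp} cutoffs $\chi_{|s|<\omega}$ and $\chi_{|s|>\omega}$; because these are not analytic symbols, the resulting bound lives only on $\mathbb{R}_+$ and must be transferred back to $(0,T)$ by a Stein extension of both $R_h$ and $f$. You instead construct an \emph{analytic} rational multiplier $M_\omega$ on the half-plane whose two scaling bounds make both $V(s)^{-1}M_\omega(s)$ and $V(s)^{-1}(1-M_\omega(s))(\Id-V(s)V_h(s)^{-1})$ fit directly into the framework~\eqref{eq:pol_bound}, so the finite-interval operator bound~\eqref{sobolev-bound} applies immediately and no extension step is needed. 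The paper's route is more elementary (nothing has to be built), while yours front-loads the work into the verification of the two displayed estimates for $M_\omega$ and then reduces everything to bookkeeping with~\eqref{comp-rule} and~\eqref{sobolev-bound}; both produce the same offset $r+6=2+4$ for exactly the reason you state.
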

\begin{proof}
 We start the proof by showing the stated estimate on a weighted $L^2$- norm on the positive half line.

\emph{(i) Proving the estimate in the $L^2(\mathbb{R}_+, e^{-2\sigma t}dt)$-norm.}
In the following, we choose $\sigma=1/T$ and use the Plancherel formula several times. We start from the standard estimate
\begin{align*}
\int_{0}^\infty e^{-2\sigma t} \left \|\varphi-\varphi_h \right\|^2_{\widetilde{H}^{-1/2}(\Gamma)} \, \mathrm d t
\le
C_\Gamma T\bigg(\int_{0}^\infty &e^{-2\sigma t}
\left\|\partial_t^2f- V(\partial_t) \partial_t^2\varphi_h \right\|^2_{H^{1/2}(\Gamma)}
\\  + &
e^{-2\sigma t}\left\|f- V(\partial_t) \varphi_h \right\|^2_{H^{1/2}(\Gamma)}\, \mathrm d t \bigg) .
\end{align*}
We note that the residual is measured in the temporal $H^2$ norm, which we intend to avoid, at the cost of introducing a data oscillation term. Following \cite{cf23}, we split the corresponding contour integral in the frequency domain and obtain, for any $\omega>0$, the estimate (for $\xi_h = f- V(\partial_t)\varphi_h $)
\begin{align*}
&\int_0^\infty e^{-2\sigma t} \left \|\partial_t^2 \xi_h \right\|_{ H^{1/2}(\Gamma)}^2
\le
\int_{\sigma+i\mathbb R} |s|^4\left \|\widehat{\xi}_h \right\|^2_{ H^{1/2}(\Gamma)}\, \mathrm d s
\\
&
\le \omega^4
\int_{\sigma+i\mathbb R} \left \|\widehat{\xi}_h \right\|^2_{ H^{1/2}(\Gamma)} \chi_{|s|<\omega}\, \mathrm d s
+\dfrac{1}{\omega^{2r}}\int_{\sigma+i\mathbb R} |s|^{4+2r}\left \|\widehat{\xi}_h \right\|^2_{ H^{1/2}(\Gamma)} \chi_{|s|>\omega}\, \mathrm d s
\\
&
\le \omega^4\int_0^\infty e^{-2\sigma t} 
\left \| \xi_h \right\|_{ H^{1/2}(\Gamma)}^2 \,\mathrm d t
+
\dfrac{1}{\omega^{2r}}
\int_0^\infty e^{-2\sigma t} 
\left \| \partial_t^{r+2} \xi_h \right\|_{ H^{1/2}(\Gamma)}^2\,\mathrm d t.
\end{align*}
Under sufficient temporal regularity of the right-hand side, we can therefore split the error estimate into
\begin{align*}
\int_0^\infty e^{-2\sigma t} \left \|\varphi-\varphi_h \right\|^2_{ \widetilde{H}^{-1/2}(\Gamma)}\,\mathrm d t
&\le
C \big(1+\omega^2\big)\int_0^\infty e^{-2\sigma t} \left\|f- V(\partial_t) \varphi_h \right\|^2_{H^{1/2}(\Gamma)}\,\mathrm d t
\\&
+\dfrac{C}{\omega^r} 
\int_0^\infty e^{-2\sigma t} 
\left\|\partial_t^{r+2}f- V(\partial_t) \partial_t^{r+2}\varphi_h \right\|^2_{H^{1/2}(\Gamma)}
\,\mathrm d t.
\end{align*}
The final term can be estimated by
\begin{align*}
\int_0^\infty e^{-2\sigma t} \left\|\partial_t^{r+2}(I-V(\partial_t)V_h^{-1}(\partial_t))f \right\|^2_{H^{1/2}(\Gamma)} \,\mathrm dt
\le
C\int_0^\infty e^{-2\sigma t} \left\|\partial^{r+6}_tf \right\|^2_{H^{1/2}(\Gamma)} \, \mathrm dt,
\end{align*}
which gives the stated result in the weighted norm on $\mathbb R_+$.

\emph{(ii) Transferring the result to $L^2(0,T)$.}
Stein's extension theorem \cite[Theorem 5.24]{AF03}, shows the existence of an extension operator that is bounded by
$$ \|E \|_{H_0^{k}(\mathbb R_+;H^{1/2}(\Gamma))\leftarrow H_0^{k}(0,T;H^{1/2}(\Gamma)) } \le C,$$
for arbitrary $k$. 
We extend both $\xi_h$ and $f$ with this operator and further extend $\varphi_h$ and $\varphi$ by defining 
\begin{align*}
\widetilde{\varphi} &=  V(\partial_t)^{-1}(E(f) )\quad \text{and}
,\quad
\widetilde{\varphi}_h =  V(\partial_t)^{-1}(E(f)- E(\xi_h) ).
\end{align*}
By causality, these functions coincide with $\varphi$ and $\varphi_h$ on $(0,T)$.
Applying the bound of part (i) and the continuity of the extension operator now yields the stated result by estimating
\begin{align*}
&\left \|\varphi-\varphi_h \right\|_{L^2(0,T; \widetilde{H}^{-1/2}(\Gamma))}^2\le e
 \int_{0}^\infty e^{-2\sigma t}\left\|\widetilde{\varphi}-\widetilde{\varphi_h} \right\|^2_{ \widetilde{H}^{-1/2}(\Gamma)}\, \mathrm d t
 \\ & 
\le C \int_0^\infty e^{-2\sigma t}\big(1+\omega^2\big)\left\|E(\xi_h) \right\|^2_{H^{1/2}(\Gamma)}
+\dfrac{e^{-2\sigma t}}{\omega^{2r}} 
\left\|\partial_t^{r+6} E(f) \right\|^2_{H^{1/2}(\Gamma)}
\, \mathrm d t
\\ & \le 
C \big(1+\omega^2\big)\left\|f- V(\partial_t) \varphi_h \right\|_{L^2(0,T;H^{1/2}(\Gamma))}^2
+\dfrac{C}{\omega^r} 
\left\|f \right\|_{H^{r+6}_0(0,T;H^{1/2}(\Gamma))}^2.
\end{align*}
\end{proof}

\section{Full discretization}
In the following, we extend our analysis to temporally discrete systems.
\subsection{Time discretization by Convolution Quadrature}

We recall the approximation of temporal convolutions $K(\partial_t)g$ by the convolution quadrature method based on underlying time stepping scheme and fix the associated notation. 
With the constant time step size $\tau>0$, we construct approximations $K(\partial_t^\tau)g^n$ to $K(\partial_t)g(t_n)$ at equidistant time points $t_n = n\tau$.  
Most effective convolution quadrature schemes are based on Runge--Kutta methods, which are fully determined by their Butcher-tableau
\begin{equation*}
	\mathscr{A} = (a_{ij})_{i,j = 1}^m \,, \quad b = (b_1,\dotsc,b_m)^T\, ,
	\quad \text{and} \quad c = (c_1,\dotsc,c_m)^T \,.
\end{equation*}
The associated stability function of the Runge--Kutta method is $R(z) = 1 + z b^T ( I - z \mathscr{A})^{-1} \mathbbm{1}$, where $\mathbbm{1} = (1,1,\dotsc,1)^T \in \mathbb{R}^m$. 
Consider an temporal convolution operator in the sense of Section~\ref{sect:temporal_convolution}, namely $\cqK(\partial_t):H^{r+\kappa}_0(0,T;V) \to \cqH^{r}_0(0,T;W)$ for arbitrary real $r$. Moreover, let $\cqg:[0,T]\to \cqX$  be a time-dependent function that is sufficiently regular and vanishes, together with sufficiently many derivatives at $t=0$ for the expression \eqref{Heaviside} to be well-defined. 
The convolution $(\cqK(\partial_t)\cqg)(t)$ is approximated at the stages of the Runge--Kutta methods, namely at the discrete times
$$
\vtn = (t_n+c_\ell \tau)_{\ell=1}^m\, ,\text{ where } t_n=n\tau\, .
$$
The Runge--Kutta differentiation symbol is defined by the expression
\begin{equation*}
	\Delta(\zeta) \, := \, \Bigl(\mathscr{A}+\frac\zeta{1-\zeta}\mathbbm{1} \cqb^T\Bigr)^{-1} \in \mathbb{C}^{m \times m}\, , \qquad
	\zeta\in\mathbb{C} \hbox{ with } |\zeta|<1\, .
\end{equation*}
As a consequence of the Sherman--Woodbury formula, this expression is well defined for $|\zeta|<1$ if $R(\infty)=1-b^T\mathscr{A}^{-1}\bone$ satisfies $|R(\infty)|\le 1$. For A-stable Runge--Kutta methods (e.g.\@ the Radau IIA methods), the eigenvalues of the matrices $\Delta(\zeta)$ have positive real part for $|\zeta|<1$ (see \cite[Lem.\@ 3]{BLM11}).
The Sherman--Morrison formula then yields the expression
\begin{equation*}
	\Delta(\zeta) \,=\, \mathscr{A}^{-1} -\frac{\zeta}{1-R(\infty)\zeta} \mathscr{A}^{-1} \bone b^T \mathscr{A}^{-1}\, .
\end{equation*}
We are now in a position to define the convolution quadrature weights ${\cqW}_n(\cqK):\cqX^m \to \cqY^m$.

We replace the complex argument $s$ in $\cqK(s)$ by the matrix-valued analytic function $\Delta(\zeta)/\tau$ and write down the power series expansion
\begin{equation*}
	\cqK\Bigl(\frac{\Delta(\zeta)}\tau \Bigr) \,=\, \sum_{n=0}^\infty {\boldsymb W}_n(\cqK) \zeta^n\,.
\end{equation*}
 In the following, we use an upper index to denote a sequence element with $m$ components.
Thus, for a sequence $\cqg=(\cqg^n)$ with $\cqg^n=(\cqg^n_\ell)_{\ell=1}^m\in \cqX^m$ we arrive at the discrete convolution denoted by
\begin{equation}\label{rkcq}
	\bigl(\cqK(\underline{\partial_t^\tau}) \boldsymb g \bigr)^n \, := \, \sum_{j=0}^n {\boldsymb W}_{n-j}(\cqK) \boldsymb g^j \in \cqY^m \,.
\end{equation}
The notation $\cqK(\underline{\partial_t^\tau}) \boldsymb g$ in \eqref{rkcq} indicates that the resulting vector contains approximations at the stages $\vtn$.
For functions $\boldsymb g:[0,T]\to \cqX$, we use this notation for the vectors $\cqg^n = \cqg(\vtn) = \bigl(\cqg(t_n+c_i\tau)\bigr)_{i=1}^m$ of values of $\cqg$.
The $\ell$-th component of the vector $\bigl(\cqK(\underline{\partial_t^\tau}) \boldsymb g \bigr)^n$, that we denote by $\bigl(\cqK(\underline{\partial_t^\tau}) \boldsymb g \bigr)^{n,\ell}$, is then an approximation to $\bigl(\cqK(\partial_t)\cqg\bigr)(t_n+c_\ell\tau)$, i.e. $\bigl(\cqK( \underline{\partial_t^\tau}) \boldsymb g \bigr)^{n,\ell} \approx \bigl(\cqK(\partial_t) \cqg \bigr)(t_{n}+c_\ell \tau)$
 (see \cite[Thm.\@ 4.2]{BL19}). 
In particular, if $c_m = 1$, as is the case with stiffly stable Runge--Kutta methods, which includes the Radau IIA methods,
the continuous convolution at $t_{n}$ is approximated by the $m$-th, i.e. last component of the $m$-vector \eqref{rkcq} for $n-1$:
\begin{equation*}
	 \left(\cqK( \partial_t^\tau) \boldsymb g\right)_n \, := \,  \bigl(\cqK( \underline{\partial_t^\tau}) \boldsymb g \bigr)^{n-1,m} \in \cqY \, . 
\end{equation*}
These components approximate the continuous convolution at the equidistant time points $t_n$, i.e. $ \left(\cqK( \partial_t^\tau) \boldsymb g\right)_n\approx\bigl(\cqK(\partial_t) \cqg \bigr)(t_{n}) $.

A property that is key to show stability in many settings is that the composition rule~\eqref{comp-rule} is preserved under this discretization: For two compatible operator families $\cqK(s)$ and $\cqL(s)$, we have
\begin{equation*}
	\cqK(\underline{\partial_t^\tau})\cqL(\underline{\partial_t^\tau})\cqg \, =\,  (\cqK\cqL)(\underline{\partial_t^\tau})\cqg \, .
\end{equation*}
Such a property can only be formulated for the vector valued discrete convolution, which includes the approximations at the stages $\vtn$, but can not be formulated for the approximation $\cqK( \partial_t^\tau) \boldsymb g $ at the equidistant time points $t_n$.
\begin{remark}[Multistep based convolution quadrature]\label{rem:multistep-cq}
The setting of multistep-based convolution quadrature is the technically simplest setting. The convolution quadrature weights are then determined by the power series 
\begin{align*}
K\left(\dfrac{\delta(\zeta)}{\tau}\right)
&=
\sum_{n=0}^\infty W_n(K)\zeta^n,
\end{align*}
where $\delta$ is the scalar-valued characteristic function of the multistep method, which is in the case of the BDF-$p$ method given by 
\begin{align*}
\delta(\zeta) = \sum_{\ell=1}^p \dfrac{1}{\ell}(1-\zeta)^\ell \quad,\text{for} \quad p=1,2 .
\end{align*}
For the method to be A-stable, we require $p\le 2$.
\end{remark}

The following error bound for Runge--Kutta convolution quadrature from~\cite[Thm.\@ 3]{BLM11}, here directly stated for the Radau IIA methods \cite[Sec.~IV.5]{hairerwannerII} and transferred to a Hilbert space setting, will be the basis for our error bounds of the  time discretization.

\begin{lemma}
	\label{lem:RK-CQ}
	Let $\cqK(s):\cqX\to \cqY$, $\real s \geq \sigma>0$,  be an analytic family of linear operators between Banach spaces $\cqX$ and $\cqY$ satisfying
	the bound \eqref{eq:pol_bound} with exponents $\kappa$ and $\nu$.
	Consider the Runge--Kutta convolution quadrature based on the Radau IIA method with $m$ stages. Let $r>\max(2m+\kappa,2m-1,m+1)$ and further let $\cqg \in \cqC^r([0,T],\cqX)$ satisfy $\cqg(0)=\cqg'(0)=...=\cqg^{(r-1)}(0)=0$. Then, the following error bound holds at $t_n=n\tau\in[0,T]$:
	\begin{multline*}
	\left\|  (\bigl(\cqK( \partial_t^\tau) \boldsymb g \bigr)_i^{n})_{i=1,\dots,m}-(\cqK(\partial_t)\cqg)(\vtn ) \right\|_{\cqY^m}
		\\ \, \le \, 
		C\, M_{1/T}\,\tau^{\min(2m-1,m+1-\kappa+\nu)}
		\left(\|{\cqg^{(r)}(0)}\|_{\cqX}+\int_0^t\|{\cqg^{(r+1)}(t')}\|_{\cqX} \,\mathrm{d}t'
		\right) \, .
	\end{multline*}
	The constant C is independent of $\tau$ and $\cqg$ and $M_\sigma$ of \eqref{eq:pol_bound}, but depends on the exponents $\kappa$ and $\nu$ in \eqref{eq:pol_bound} and on the final time $T$.
\end{lemma}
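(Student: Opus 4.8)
Since this is, up to specializing the order parameters to the Radau IIA method and passing to an operator-valued setting, Theorem~3 of \cite{BLM11}, the plan is to reproduce that argument, which I would organize around three ingredients: (a) an operational-calculus representation of $\cqK(\partial_t)\cqg$ and of the Runge--Kutta convolution quadrature $\cqK(\underline{\partial_t^\tau})\cqg$ as contour integrals in the Laplace variable $s$, respectively in the generating-function variable $\zeta$, so that their difference becomes a single contour integral; (b) a reduction of the temporal regularity of $\cqg$ via the composition rule \eqref{comp-rule} and its discrete analogue, which is what allows one to integrate along a vertical line in the first place; (c) a scalar estimate of the Runge--Kutta convolution quadrature error symbol by splitting low and high temporal frequencies.

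For (b) I would use that $\cqg$ and its first $r-1$ derivatives vanish at $t=0$, so that for $\Re s\ge 0$
\begin{equation*}
\bigl\|s^{r+1}\widehat{\cqg}(s)\bigr\|_{\cqX}=\bigl\|\cqg^{(r)}(0)+\mathcal L(\cqg^{(r+1)})(s)\bigr\|_{\cqX}\le\|\cqg^{(r)}(0)\|_{\cqX}+\int_0^t\|\cqg^{(r+1)}(t')\|_{\cqX}\,\mathrm dt',
\end{equation*}
which is exactly the data factor in the claim. Writing $\widetilde{\cqK}(s)=s^{-(r+1)}\cqK(s)$, the composition rules move these $r+1$ powers of $s$ onto the operator, at the price of a higher-order discrete artefact absorbed into the error; by \eqref{eq:pol_bound}, $\|\widetilde{\cqK}(s)\|_{\cqY\leftarrow\cqX}\le M_\sigma|s|^{\kappa-r-1}(\Re s)^{-\nu}$, and since $r>2m+\kappa$ the exponent $\kappa-r-1<-1$ makes $\widetilde{\cqK}$ absolutely integrable along the line $\Re s=1/T$. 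I would run the whole contour argument on that line, which produces the factor $M_{1/T}$ and the $T$-dependence (from $e^{\sigma T}$ with $\sigma=1/T$); the half-plane $\Re s<1/T$, where $(\Re s)^{-\nu}$ is large, is never needed because $\cqg$ lives on $[0,T]$.

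The core step is (c): estimating the difference of the two integrands, i.e.\ the scalar error symbol comparing the exact propagator $e^{z}$ of the test equation $y'=y$ with the Radau IIA stability function $R(z)=1+zb^T(I-z\mathscr A)^{-1}\mathbbm 1$ and the associated stage vector, with $z=s\tau$. On the low-frequency part $|z|\lesssim 1$ I would Taylor-expand and invoke the order conditions of Radau IIA (classical order $2m-1$, stage order $m$); this régime contributes the exponent $2m-1$. On the high-frequency part $|z|\gtrsim 1$ I would use $L$-stability of Radau IIA ($R(\infty)=0$) together with the fact that the eigenvalues of $\Delta(\zeta)$ have positive real part for $|\zeta|<1$, so the discrete propagator stays bounded, and combine this with $\|\cqK(s)\|\lesssim|s|^\kappa(\Re s)^{-\nu}$; this régime contributes the exponent $m+1-\kappa+\nu$. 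Optimizing the split radius $\sim\tau^{-1}$ and integrating over the contour then gives $\tau^{\min(2m-1,\,m+1-\kappa+\nu)}$. Because the discrete composition rule, and hence this argument, is available only for the vector of stage values at $\vtn$, the estimate comes out in the $\cqY^m$-norm, as stated. The main obstacle is precisely (c): proving the scalar error-symbol bound \emph{uniformly} in $s$ along the contour with the exact exponent $\min(2m-1,m+1-\kappa+\nu)$, and carefully tracking the competition between the $(\Re s)^{-\nu}$ growth near the imaginary axis (kept under control by staying at distance $1/T$) and the algebraic decay furnished by $L$-stability; everything else is bookkeeping with the operational calculus and the composition rules.
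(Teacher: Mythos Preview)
The paper does not prove this lemma at all: it is quoted verbatim as \cite[Thm.~3]{BLM11}, specialized to Radau~IIA and restated in a Hilbert-space setting, with no argument beyond the citation. Your proposal correctly identifies this source and gives a faithful outline of the Banjai--Lubich--Melenk proof (contour-integral representation, regularity reduction via the composition rule, low/high-frequency splitting of the error symbol using classical order $2m-1$, stage order $m$, and $L$-stability of Radau~IIA), so there is nothing to compare against and your sketch is appropriate.
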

\subsection{Modified formulation}\label{sec:modified}

In the context of transfer operators that are bounded only by a high power of $|s|$ (i.e. that fulfill \eqref{eq:pol_bound} with a moderately large $\kappa$), the Runge--Kutta convolution quadrature approximation suffers from order reductions. We use the following modification to the convolution quadrature method to overcome this difficulty. Let $g\in H_0^1(\mathbb R_+;V)$ be some arbitrary density, for which its Laplace transform
$$\mathcal L (g(\cdot-\eta)) (s)= \int_0^\infty e^{-st}g(t-\eta) \, \mathrm d t
=
e^{-s\eta}\int_{-\eta}^\infty e^{-st}g(t) \, \mathrm d t
=e^{-s\eta}\mathcal L(g) (s)
.$$
We observe that the $\eta-$shift in time-domain is a multiplication with $J_\eta(s)=e^{-s\eta}$ in the Laplace domain.
The boundary integral equation can therefore be modified, 
\begin{align*}
    \varphi 
    &=
    \left( J_\eta (\partial_t)V(\partial_t)^{-1} \right)    J_{-\eta} (\partial_t)f.
\end{align*}
The modification 
\begin{align}
    J_{-\eta} (\partial_t)f(t) = 
\begin{cases}
    0 ,&\text{for} \quad t\le \sigma ,
    \\    f(t+\eta)  , &\text{for} \quad t>\sigma .
\end{cases}
\end{align}
can be realized exactly, by appending the function by zeros (and is valid since $f$ is assumed to vanish on the negative real-axis). In the Laplace domain, we now have that 
\begin{align}\label{eq:shifted-s-est}
 \| J_\eta (s)V(s)^{-1}\|_{\widetilde{H}^{-1/2}(\Gamma)\leftarrow H^{1/2}(\Gamma)} 
 \le C_\Gamma \dfrac{|s|^2+1}{\mathrm{Re} \,s} e^{-\eta \mathrm{Re}\, s} .
\end{align}
The shifted scheme then reads
\begin{align}\label{eq:time-discr-shift}
\varphi^\tau_\eta
&=
\left( V(\partial^\tau_t)^{-1} \right)   J_\eta (\partial_t^\tau) J_{-\eta} (\partial_t)\underline{f}.
\end{align}
Note that in order to construct a useful solution to the original convolution equation, we have to compute the solution to the convolution equation \eqref{eq:time-discr-shift} until the final time $T+\eta$ and then disregard the numerical solution on the extended interval $[T,T+\eta]$. 
This modification has two desirable effects:

\emph{Full classical convergence rate.} The exponential decay in the Laplace domain estimate \eqref{eq:shifted-s-est} improves corresponding the error bound Lemma~\ref{lem:RK-CQ} and shows that the shifted density $\varphi^\tau_\eta$ converges at the full classical order $\tau^{2m-1}$. Without any shift, i.e. $\eta=0$, this reduces to the stage order $\tau^m$.

\emph{Fast evaluation due to the truncation of damped wave numbers.} The ``all-at-once" implementation of the convolution quadrature method reduces the computational bottleneck to the evaluation of the Laplace domain operator $J_\eta (s)V(s)^{-1}$ at several different frequencies $s_j$ (see e.g. \cite[Section 3.2]{BS09}). Due to the exponential decay in the bound \eqref{eq:shifted-s-est}, the operator can be truncated for Laplace domain arguments $s$ with real part that is larger than some predetermined constant.

\subsection{Fully discrete a posteriori error analysis}
For arbitrary real-valued order $r>0$, we have the following chain of equations 
\begin{align}
\begin{aligned}\label{eq:time-harmonic-a-posteriori3}
\left\| \varphi^\tau -\varphi^\tau_h \right\|_{H^r_0(0,T; \widetilde{H}^{-1/2}(\Gamma))}
& =
\left\|V(\underline{\partial_t^\tau})^{-1}\left(V(\underline{\partial_t^\tau})  \varphi^\tau - V(\underline{\partial_t^\tau}) \varphi^\tau_h \right)\right\|_{H^r_0(0,T; \widetilde{H}^{-1/2}(\Gamma))}
\\ & \le C 
\left\|(\underline{\partial_t^\tau})^{2}\left(J_{-\eta}(\underline{\partial_t^\tau})J_{\eta}(\partial_t)\underline{f} - V(\underline{\partial_t^\tau}) \varphi^\tau_h \right)\right\|_{H^r_0(0,T; \widetilde{H}^{-1/2}(\Gamma))}.
\end{aligned}
\end{align}
Here, we used the operator valued bound
\begin{align*}
\left\|V(\underline{\partial_t^\tau})^{-1}
(\underline{\partial_t^\tau})^{-2}\right\|_{H^r_0(0,T;\widetilde{H}^{-1/2}(\Gamma))\leftarrow H^r_0(0,T;H^{1/2}(\Gamma))} \le C .
\end{align*}
By combining the estimate with the temporal semi-discrete error bound yields the following error estimate.
\begin{theorem}\label{th:full}
Under temporal regularity on the exact solution, we have the estimate
\begin{align*}
\left\| \varphi -\varphi^\tau_h \right\|_{H^r_0(0,T; \widetilde{H}^{-1/2}(\Gamma))}
&  \le C 
\left\|(\underline{\partial_t^\tau})^{2}\left(J_{-\eta}(\underline{\partial_t^\tau})J_{\eta}(\partial_t)\underline{f} - V(\underline{\partial_t^\tau}) \underline{\varphi^\tau_h} \right)\right\|_{H^r_0(0,T; \widetilde{H}^{-1/2}(\Gamma))}
\\&+C\tau^{2m-1},
\end{align*}
where the constant $C$ depends on the geometry of $\Omega$, polynomially on the final time $T$ and on regularity of $f$.
\end{theorem}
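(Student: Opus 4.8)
The plan is to obtain the estimate by a triangle inequality through the time-discrete, spatially continuous density $\varphi^\tau$, combining the fully discrete a posteriori bound \eqref{eq:time-harmonic-a-posteriori3} with the convolution quadrature error estimate of Lemma~\ref{lem:RK-CQ} applied to the shifted scheme. The starting point is
\begin{align*}
\left\| \varphi -\varphi^\tau_h \right\|_{H^r_0(0,T; \widetilde{H}^{-1/2}(\Gamma))}
&\le
\left\| \varphi -\varphi^\tau \right\|_{H^r_0(0,T; \widetilde{H}^{-1/2}(\Gamma))}
\\ &\quad +
\left\| \varphi^\tau -\varphi^\tau_h \right\|_{H^r_0(0,T; \widetilde{H}^{-1/2}(\Gamma))} .
\end{align*}
The second term is exactly what \eqref{eq:time-harmonic-a-posteriori3} controls: since by construction $V(\underline{\partial_t^\tau})\varphi^\tau$ equals the shifted discrete data $J_{-\eta}(\underline{\partial_t^\tau})J_{\eta}(\partial_t)\underline{f}$, the discrete residual equals $V(\underline{\partial_t^\tau})(\varphi^\tau-\varphi^\tau_h)$, and applying $V(\underline{\partial_t^\tau})^{-1}(\underline{\partial_t^\tau})^{-2}$ — which is bounded on $H^r_0(0,T;\widetilde H^{-1/2}(\Gamma))\leftarrow H^r_0(0,T;H^{1/2}(\Gamma))$ by \eqref{eq:V_h-inv}, the composition rule for discrete convolutions and \eqref{sobolev-bound} — reduces it to the residual term displayed in the theorem.

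It then remains to prove $\| \varphi -\varphi^\tau \|_{H^r_0(0,T; \widetilde{H}^{-1/2}(\Gamma))} \le C\tau^{2m-1}$. Here one writes $\varphi = (J_\eta(\partial_t)V(\partial_t)^{-1})(J_{-\eta}(\partial_t)f)$ and $\varphi^\tau$ as the corresponding convolution quadrature approximation, so that $\varphi-\varphi^\tau$ is the pure time-discretization error for the operator family $J_\eta(s)V(s)^{-1}$ applied to the shifted data $J_{-\eta}(\partial_t)f$. Because $J_\eta(s)V(s)^{-1}$ obeys the sharpened bound \eqref{eq:shifted-s-est}, the exponential factor $e^{-\eta\Re s}$ absorbs the polynomial growth in $|s|$ and effectively removes the $m+1-\kappa+\nu$ branch of the minimum in Lemma~\ref{lem:RK-CQ}, leaving the classical order $\tau^{2m-1}$. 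The regularity hypotheses of Lemma~\ref{lem:RK-CQ} are satisfied under the assumed temporal smoothness of the exact solution — equivalently, of $f$ via \eqref{sobolev-bound} — the shift pushing the vanishing order at $t=0$ even higher, and the constant from Lemma~\ref{lem:RK-CQ} is polynomial in $T$ and controlled by Sobolev norms of $f$, which produces the form of the constant $C$ claimed in the theorem. Finally, the shifted scheme is computed on $[0,T+\eta]$ and restricted back to $[0,T]$, which is harmless by causality, and the constants are collected.

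The main obstacle is that Lemma~\ref{lem:RK-CQ} delivers the convolution quadrature error only at the discrete times $t_n$ (and stages $\vtn$) in the $\cqY^m$ norm, whereas the theorem is stated in the temporal Sobolev norm $H^r_0(0,T; \widetilde{H}^{-1/2}(\Gamma))$; a naive inverse estimate in time would cost a factor $\tau^{-r}$ and would destroy the rate. One therefore has to run the contour-integral/generating-function argument underlying Lemma~\ref{lem:RK-CQ} directly in the temporal Sobolev-norm setting, in the spirit of \cite{L94,BLM11}, so that the exponential decay in \eqref{eq:shifted-s-est} can still be exploited to retain the full order $2m-1$; alternatively, one estimates the composite discrete symbol $(\underline{\partial_t^\tau})^2 J_\eta(\underline{\partial_t^\tau})V(\underline{\partial_t^\tau})^{-1}$ in the Sobolev norm and couples it with the discrete residual bound.
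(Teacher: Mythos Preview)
Your proposal is correct and follows essentially the same approach as the paper: the paper's proof is a one-line combination of the discrete a posteriori bound \eqref{eq:time-harmonic-a-posteriori3} for $\|\varphi^\tau-\varphi^\tau_h\|$ with the temporal semi-discretization error $\|\varphi-\varphi^\tau\|\le C\tau^{2m-1}$ obtained from Lemma~\ref{lem:RK-CQ} and the exponentially decaying bound \eqref{eq:shifted-s-est} of the shifted formulation. The technical concern you raise about passing from the pointwise-in-time estimate of Lemma~\ref{lem:RK-CQ} to the temporal Sobolev norm is legitimate and is likewise not made explicit in the paper's terse justification.
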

For the stages, we further obtain
\begin{align*}
\left\| \underline{\varphi} -\underline{\varphi^\tau_h} \right\|_{H^r_0(0,T; \widetilde{H}^{-1/2}(\Gamma))}
&  \le C 
\left\|(\underline{\partial_t^\tau})^{2}\left(\underline{f} - V(\underline{\partial_t^\tau}) \varphi^\tau_h \right)\right\|_{H^r_0(0,T; \widetilde{H}^{-1/2}(\Gamma))}
+C\tau^{m+1}.
\end{align*}

\subsection{A posteriori error analysis of BDF methods}
 In the case of multistep methods, we can improve the a posteriori error estimate by using the stability of the convolution quadrature method.
\begin{theorem}\label{thm:full-no-derivatives}
Let $\omega>0$ be an arbitrary constant. Then, the space discretization error of the temporal semi-discretization (with the BDF-1 or the BDF-2 method) is bounded, for all $r\in \mathbb N$, by
\begin{align*}
\left \|\varphi^\tau-\varphi_h^\tau \right\|_{L^2(0,T; \widetilde{H}^{-1/2}(\Gamma))}
&\le
C \big(1+\omega^2\big)\left\|f- V(\partial_t^\tau) \varphi_h^\tau \right\|_{L^2(0,T;H^{1/2}(\Gamma))}
\\&
+\dfrac{C}{\omega^r} 
\left\|f \right\|_{H^{r+6}_0(0,T;H^{1/2}(\Gamma))},
\end{align*}
if the right-hand side is finite.
\end{theorem}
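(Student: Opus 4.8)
The plan is to carry over the two-part argument behind Proposition~\ref{prop:semi-discrete-space} to the time-discrete setting, replacing the Laplace transform and the Plancherel identity on vertical lines by the $z$-transform and the discrete Plancherel identity on circles, which is the Fourier analysis adapted to multistep convolution quadrature. Fix $\sigma=1/T$, write $\delta$ for the characteristic function of the BDF-$p$ method ($p\le 2$) from Remark~\ref{rem:multistep-cq}, and associate to a grid function $u=(u_n)_{n\ge0}$ its generating function $\widehat u(\zeta)=\sum_{n\ge0}u_n\zeta^n$, so that the generating function of $K(\partial_t^\tau)u$ is $K(\delta(\zeta)/\tau)\widehat u(\zeta)$. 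Because multistep convolution quadrature preserves the composition rule, $V(\partial_t^\tau)$ and $V_h(\partial_t^\tau)$ are invertible on grid functions with $V(\partial_t^\tau)^{-1}=(V^{-1})(\partial_t^\tau)$; setting $\xi_h^\tau:=f-V(\partial_t^\tau)\varphi_h^\tau=(I-V(\partial_t^\tau)V_h(\partial_t^\tau)^{-1})f$, the time-discrete Galerkin identity $V(\partial_t^\tau)\varphi^\tau=f$ gives the starting point $\varphi^\tau-\varphi_h^\tau=V(\partial_t^\tau)^{-1}\xi_h^\tau$.

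First I would prove the estimate in the weighted norm $\tau\sum_{n\ge0}e^{-2\sigma t_n}\|\,\cdot\,\|^2$ on $\mathbb{R}_+$. The discrete Plancherel identity on the circle $|\zeta|=e^{-\sigma\tau}$ (so that $|\zeta|^{2n}=e^{-2\sigma t_n}$) rewrites the squared weighted norm of $\varphi^\tau-\varphi_h^\tau$ as an average over $\theta$ of $\|V(\delta(\zeta)/\tau)^{-1}\widehat{\xi_h^\tau}(\zeta)\|_{\widetilde H^{-1/2}(\Gamma)}^2$ with $\zeta=e^{-\sigma\tau+i\theta}$. Here the decisive BDF input enters: A-stability of BDF-$p$ for $p\le 2$ provides constants $c,C>0$ with $\Re(\delta(\zeta)/\tau)\ge c\sigma$ and $c\sigma\le|\delta(\zeta)/\tau|\le C\tau^{-1}$ on $|\zeta|=e^{-\sigma\tau}$ once $\sigma\tau$ is small (hence for all $\tau$ small enough), and combining this with \eqref{eq:V_h-inv} gives $\|V(\delta(\zeta)/\tau)^{-1}\|_{\widetilde H^{-1/2}(\Gamma)\leftarrow H^{1/2}(\Gamma)}\le CT\,(|\delta(\zeta)/\tau|^2+1)$. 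Exactly as in Proposition~\ref{prop:semi-discrete-space} I then split the integrand at $|\delta(\zeta)/\tau|=\omega$, bounding the symbol by $CT(1+\omega^2)$ on the low-frequency part and using $|\delta(\zeta)/\tau|^2\le\omega^{-r}|\delta(\zeta)/\tau|^{r+2}$ on the high-frequency part; undoing the discrete Plancherel identity yields
\begin{align*}
\tau\sum_{n\ge0}e^{-2\sigma t_n}\|\varphi_n^\tau-\varphi_{h,n}^\tau\|_{\widetilde H^{-1/2}(\Gamma)}^2
\le
C(1+\omega^2)^2\,\tau\sum_{n\ge0}e^{-2\sigma t_n}\|\xi_{h,n}^\tau\|_{H^{1/2}(\Gamma)}^2
+\frac{C}{\omega^{2r}}\,\tau\sum_{n\ge0}e^{-2\sigma t_n}\bigl\|\bigl((\partial_t^\tau)^{r+2}\xi_h^\tau\bigr)_n\bigr\|_{H^{1/2}(\Gamma)}^2 .
\end{align*}

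Next I would control the oscillation term. By the composition rule $f\mapsto(\partial_t^\tau)^{r+2}\xi_h^\tau$ is again a convolution quadrature operator, with Laplace symbol $s^{r+2}\bigl(I-V(s)V_h(s)^{-1}\bigr)$; using the $s$-explicit bound for $V(s)$ (\cite[Proposition~3]{B86I}) together with \eqref{eq:V_h-inv}, this symbol is polynomially bounded in the sense of \eqref{eq:pol_bound} with exponent $\kappa=r+6$ and a bounded power of $1/\Re s$. The convolution quadrature stability estimate (the discrete counterpart of \eqref{sobolev-bound}; see \cite{L94}) then gives
\begin{align*}
\tau\sum_{n\ge0}e^{-2\sigma t_n}\bigl\|\bigl((\partial_t^\tau)^{r+2}\xi_h^\tau\bigr)_n\bigr\|_{H^{1/2}(\Gamma)}^2
\le C\,\|f\|_{H^{r+6}_0(0,T;H^{1/2}(\Gamma))}^2
\end{align*}
whenever $f$ vanishes together with sufficiently many derivatives at $t=0$ and the right-hand side is finite, which finishes the weighted estimate on $\mathbb{R}_+$.

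Finally I would transfer the bound to $L^2(0,T)$ exactly as in part (ii) of the proof of Proposition~\ref{prop:semi-discrete-space}: extend $f$ to $\mathbb{R}_+$ with Stein's extension operator \cite[Theorem 5.24]{AF03}, run the convolution quadrature scheme with the extended datum, observe that by causality of convolution quadrature the resulting grid functions coincide with $\varphi^\tau$ and $\varphi_h^\tau$ at all nodes $t_n\le T$, and use $e^{-2\sigma t_n}\ge e^{-2}$ there to dominate the unweighted discrete $L^2(0,T;\widetilde H^{-1/2}(\Gamma))$-norm by the weighted $\ell^2(\mathbb{R}_+)$-norm; the boundedness of the extension operator together with the first part then gives the claim. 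The one genuinely $p$-dependent ingredient, and the step I expect to require the most care, is the quantitative behaviour of $\delta(\zeta)/\tau$ on the circle $|\zeta|=e^{-\sigma\tau}$ — that its real part stays comparable to $\sigma$ uniformly in $\tau$ and $\theta$ — which is precisely what A-stability of BDF-$p$ for $p\le 2$ supplies and which is what forces the restriction $p\le 2$; the discrete Plancherel identity, the $\omega$-splitting and the symbol calculus on the weighted sequence spaces are the verbatim discrete analogues of the continuous arguments already carried out for Proposition~\ref{prop:semi-discrete-space}.
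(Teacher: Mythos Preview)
Your proposal is correct and follows essentially the same architecture as the paper's proof: pass to the frequency domain via Plancherel, use the $s$-explicit bound on $V(s)^{-1}$, split into low- and high-frequency parts at the threshold $\omega$, absorb the high-frequency piece into a data-oscillation term controlled by $\|f\|_{H^{r+6}_0}$, and finally transfer from $\mathbb{R}_+$ to $(0,T)$ with the Stein extension argument from part~(ii) of Proposition~\ref{prop:semi-discrete-space}.

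The one implementation difference worth noting is the choice of frequency-domain framework. The paper stays with the \emph{continuous} Laplace transform, writing the CQ symbol as $\delta(e^{-s\tau})/\tau$ on the line $\Re s=\sigma$, splits according to $|s|\lessgtr\omega$, and invokes the bound $|\delta(e^{-s\tau})/\tau|\le C|s|$ from \cite[Lemma~3.3]{BLN20} to close the estimate; you instead use the discrete $z$-transform and Parseval on the circle $|\zeta|=e^{-\sigma\tau}$, split according to $|\delta(\zeta)/\tau|\lessgtr\omega$, and invoke A-stability directly for the lower bound on $\Re(\delta(\zeta)/\tau)$. Both routes are valid for A-stable BDF-$p$ ($p\le2$) and lead to the same conclusion; your version is perhaps slightly more self-contained, while the paper's version keeps the continuous $H^{r+6}_0$ norm of $f$ visible throughout without an extra passage from discrete to continuous norms.
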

\begin{proof}
In the following, we write $R^\tau_h=f- V(\partial_t^\tau)\varphi^\tau_h $.
We (formally) start with 
\begin{alignat*}{2}
&\int_0^\infty e^{-2\sigma t}\left \|\varphi^\tau-\varphi_h^\tau \right\|^2_{\widetilde{H}^{-1/2}(\Gamma)} \, \mathrm d t
= 
\int_{\sigma+i\mathbb R}
\left\|\widehat{\varphi}^\tau-\widehat{\varphi}_h^\tau \right\|^2_{\widetilde{H}^{-1/2}(\Gamma)} \, \mathrm d s&&
\\&\le C
\int_{\sigma+i\mathbb R}
\left|\left(\dfrac{\delta(e^{-s\tau})}{\tau}\right)^2\right|^2
\left\|\widehat{f}-V\left(\dfrac{\delta(e^{-s\tau})}{\tau}\right)\widehat{\varphi}_h^\tau \right\|^2_{\widetilde{H}^{-1/2}(\Gamma)} \, \mathrm d s &&
\\&\le
C \sup_{s\in \sigma+i\mathbb R, |s|<\omega}\left|\left(\dfrac{\delta(e^{-s\tau})}{\tau}\right)^2\right|^2
\int_{\sigma+i\mathbb R}
\left\|\widehat{f}-V\left(\dfrac{\delta(e^{-s\tau})}{\tau}\right)\widehat{\varphi}_h^\tau \right\|^2_{\widetilde{H}^{-1/2}(\Gamma)} \chi_{|s|<\omega}\, \mathrm d s &&
\\&+
C \dfrac{1}{\omega^{2r}}
\int_{\sigma+i\mathbb R}
|s|^{2r}
\left|\left(\dfrac{\delta(e^{-s\tau})}{\tau}\right)^2\right|^2
\left\|\widehat{f}-V\left(\dfrac{\delta(e^{-s\tau})}{\tau}\right)\widehat{\varphi}_h^\tau \right\|^2_{\widetilde{H}^{-1/2}(\Gamma)} \chi_{|s|>\omega}\, \mathrm d s,
\\&\le
C \omega^4
\int_0^\infty e^{-2\sigma t} \left\| f- V(\partial_t^\tau)\varphi^\tau_h\right\|^2_{\widetilde{H}^{-1/2}(\Gamma)} \,\mathrm d t 
+
 \dfrac{C}{\omega^{2r}}
\int_{\sigma+i\mathbb R}
|s|^{2r+4+8}
\left\|\widehat{f} \right\|^2_{\widetilde{H}^{-1/2}(\Gamma)}\, \mathrm d s.
\end{alignat*}
The last estimate holds only in the case of $\delta(\zeta) = 1-\zeta$ (implicit Euler method), or the BDF-$2$ method, with \cite[Lemma 3.3]{BLN20}. The bound can then be carried over to the finite time interval $(0,T)$ by the extension argument of the proof of Proposition~\ref{prop:semi-discrete-space}.
\end{proof}

\subsection{Localized error estimator}

As in \cite{gs18} (pp.~340-342 for $2$d, pp.~349-352 for $3$d), the right hand side is estimated by a local, computable quantity:
\begin{align*}
		&\left\|\left(\partial^\tau_t\right)^kf- V(\partial_t^\tau) \left(\partial_t^\tau\right)^k\varphi^\tau_h \right\|^2_{l^2(0,T;H^{1/2}(\Gamma))}\\ &\leq \widetilde{C}_\Gamma \tau \sum_{t_i} \sum_{E_j} h_{E_j}\int_{E_j} \left(\left(\partial_t^\tau\right)^k\nabla_{\Gamma}(f(t_i,x)- V(\partial_t^\tau)\varphi^\tau_h(t_i,x))\right)^2 dx,
\end{align*}
where $k\in \mathbb{N}_0$, $h_{E_j}$ is the diameter of the element $E_j$ and $t_i = i \tau$. If relevant, the proof also extends to $H^{s}(\Gamma)$ for $s \in [0,1]$.\\

The space-adaptive algorithm relies on the error indicator $$\eta(E_j)^2 = \tau  h_{E_j}\sum_{k\in\{0,2\}}\sum_{t_i}\int_{E_j} \left(\left(\partial_t^\tau\right)^k\nabla_{\Gamma}(f(t_i,x)- V(\partial_t^\tau)\varphi^\tau_h(t_i,x))\right)^2 dx$$
in the element $E_j$ of the spatial mesh. Here, $\nabla_\Gamma$ denotes the surface gradient on $\Gamma$.

\section{Adaptive algorithm and implementation}\label{sec:adaptive}
We refer the reader to \cite{goss20} for more details on adaptive schemes, there presented in the context of a space-time Galerkin method. We follow the classical construction of adaptive finite element methods, which has the structure: \\

\begin{center}
		\begin{tikzpicture}[every node/.style={font=\ttfamily}, >=Stealth]
			
			\node[draw, rectangle] (solve) at (0,0) {solve};
			\node[draw, rectangle] (estimate) at (3,0) {estimate};
			\node[draw, rectangle] (mark) at (6,0) {mark};
			\node[draw, rectangle] (refine) at (9,0) {refine};
			
			\draw[->] (solve) -- (estimate);
			\draw[->] (estimate) -- (mark);
			\draw[->] (mark) -- (refine);
			
		\end{tikzpicture}
\end{center}
\
\\
More precisely, we iterate over the following steps, until the error indicator falls below a target tolerance:
\begin{enumerate}
	\item Solve the boundary integral equation \begin{align*}
		\left\langle v_h, V(\underline{\partial}_t^\tau) \varphi^\tau_h \right\rangle_\Gamma 
		& = 
		\left\langle v_h, J_\eta (\underline{\partial}_t^\tau) J_{-\eta} (\underline{\partial}_t) \underline{f}  \right\rangle_\Gamma.
	\end{align*}
	\item Compute error indicators $\eta(E_j)$,
        \item Use the Dörfler marking criterion \cite{D96} to mark those elements that have a relatively large error indicator, characterized that for some predetermined $\theta\in (0,1)$ it holds that
        \begin{align*}
            \eta(E_j) > \theta \eta_{\max},
        \end{align*}
        where $\eta_{\max}$ is the maximal error indicator of the mesh.
	\item Refine all marked elements $E_j$.
\end{enumerate}
The practical computation and implementation of the error estimator involves some numerical challenges, for which we follow \cite[Section~7.1]{bespalov2019adaptive}. \\

To be specific, for a given discrete solution $\varphi_h^{\tau}(t_i) \in X_h$ at time step $t_i = i\tau$  we project $V(\partial_t^{\tau})\varphi_h^{\tau}$ to a function $P_h(t_i) $ in the finite element space of piecewise linear functions $Y_h \subset H^1(\Gamma)$. We also determine the best approximation $f_h(t_i) \in Y_h$ for the right-hand side $f(t) \in H^{1/2}(\Gamma)$. Then $R_h^{\tau}(t_i) = P_h(t_i)  - f_h(t_i) \in Y_h$. For every element $E_j$ of the spatial mesh we approximate
\begin{equation}
    \eta(E_j)^2 \approx \eta_h(E_j)^2 \coloneqq \tau h_{E_j} \sum_{t_i} \int_{E_j} |\nabla_{\Gamma} R_h^{\tau}(t_i, x)|^2  dx,
\end{equation}
where $\eta_h(E_j)$ can be computed exactly by numerical quadrature. In the numerical experiments below, $5$ Gauss quadrature points are used per element.

\section{Numerical Experiments}

For the numerical experiments we consider a right hand side corresponding to an incoming plane wave. Define
\begin{equation*}
    f(t) = \sin(\omega (t - t_{\text{lag}}))H(t)H(L - t), \qquad \omega = 2,\ L = 2,
\end{equation*}
where 
\begin{equation*}
    H(t) = 1 - \dfrac{1}{1+\exp(\beta t)}, \qquad \beta = 5,
\end{equation*}
and $t_{\text{lag}} = 4$. For a given direction $d \in \mathbb{R}^2$ we define $u^{\text{inc}}$ as the plane wave
\begin{equation*}
    u^{\text{inc}}(x) \coloneqq f(x \cdot d - t), \qquad \text{ for all }x\in \mathbb{R}^2.
\end{equation*}
We solve the Dirichlet scattering problem up to the final time $T=10$ with a Galerkin boundary-element discretization in space with $\mathcal{P}_0$--basis functions and RKCQ with a two-step Radau IIA in time. The following schemes are compared: uniform refinement; adaptive refinement (described in Section \ref{sec:adaptive}); $\beta$--graded meshes with grading parameter $\beta \in \{2, 3\}$ \cite{gs18}. Point evaluations are computed using the representation formula at the points $x_1 = (2, 2), \ x_2 = (-2, 2), \ x_3 = (-2, -2)$ and $x_4 = (2, -2)$. The corresponding error is computed as the square root of the sum of the squares of errors in each of these points. \\

We measure the errors in the energy space $L^2(0, T; \widetilde{H}^{-1/2}(\Gamma))$ using the equivalent norm

\begin{equation}\label{eq:energy-norm} 
\begin{aligned}
    \norm{ \varphi_h^{\tau}}_{L^2(0, T; \widetilde{H}^{-1/2}(\Gamma))}^2 &= \int \norm{\varphi_h^{\tau}(t)}_{\widetilde{H}^{-1/2}(\Gamma)}^2 \ \mathrm{d}t = \int \left\langle\varphi_h^{\tau}(t), V(1)\varphi_h^{\tau}(t)\right\rangle_{\Gamma} \ \mathrm{d}t \\ 
    &\approx \tau \sum_{t_i} \left\langle\varphi_h^{\tau}(t_i), V(1)\varphi_h^{\tau}(t_i)\right\rangle_{\Gamma},
    \end{aligned}
\end{equation}
where $V(1)$ is the single-layer boundary integral operator with $s = 1.$

\subsection{Flat Screen}\label{sec:segment}
 We solve the problem at a flat screen $\Gamma \coloneqq (-1, 1)\times \{0\}$. We show a sequence of meshes obtained by adaptive refinement in Figure \ref{fig:mesh-refinement}. Results for convergence of point evaluation and convergence of densities in energy norm \eqref{eq:energy-norm} are presented in Figure \ref{fig:CQ1}. The error estimator underestimates the error roughly by an order of magnitude. Nevertheless, we observe that the adaptive refinement displays the same behavior as the asymptotically optimal $3$--graded mesh refinement. Figure~\ref{fig:CQ3} shows snapshots of the density $\varphi_h^{\tau}$ at times $t = 2, \ 4, \ 6$ and $8$. Near the endpoints, the solution behaves singularly during all the snapshot times, which is the key challenge that has to be overcome by the numerical approximation for an accurate solution. The sequence of refinements is visualized in Figure~\ref{fig:mesh-refinement}.
\begin{figure}[ht!]
    \centering
    \includegraphics[width=0.6\linewidth]{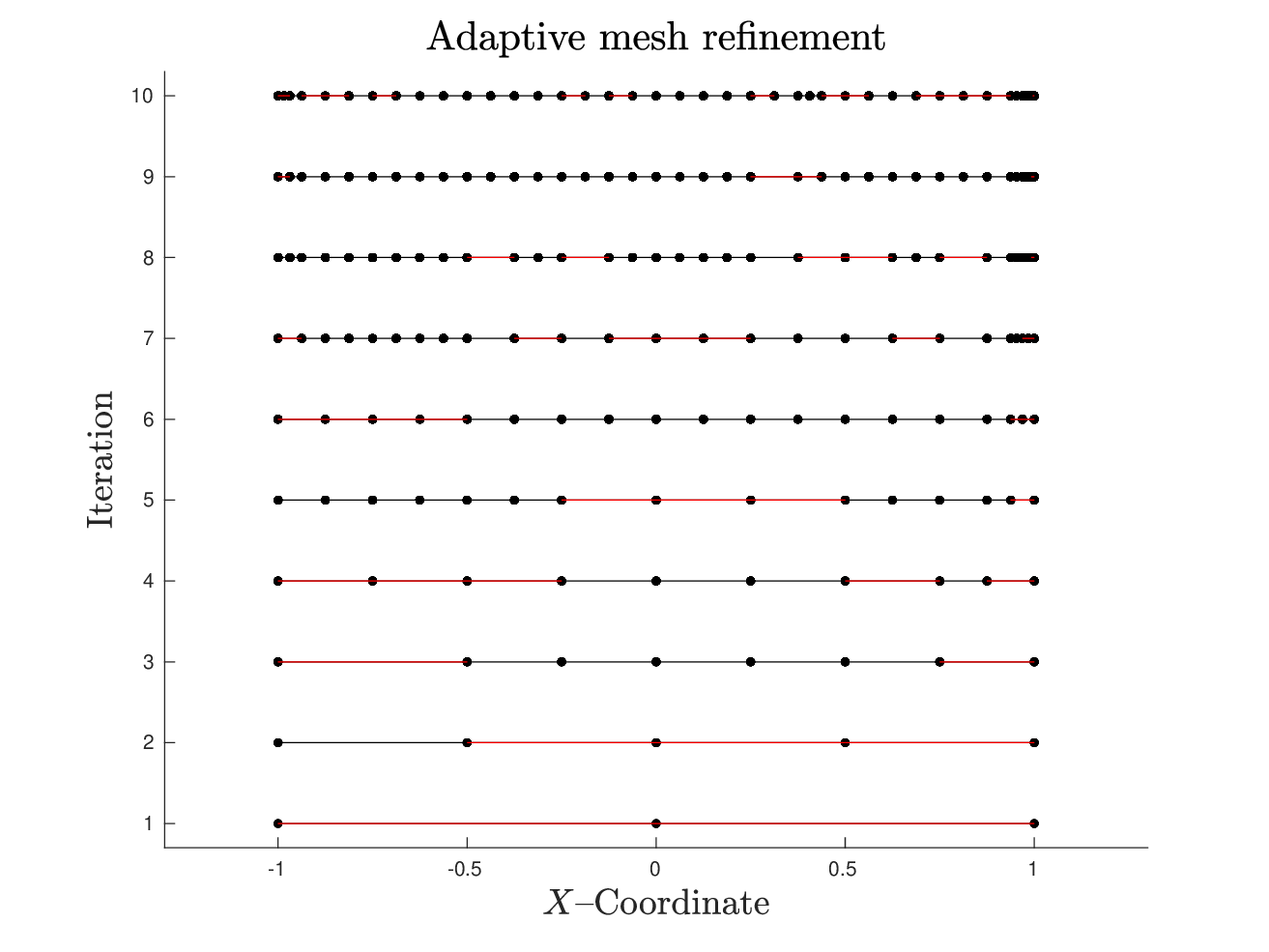}
    \caption{Scattering at a flat screen, $d = (-\tfrac{\sqrt{3}}{2}, \tfrac{1}{2})$: sequence of meshes obtained by adaptive refinement. The red lines denote the marked elements at each iteration.}
    \label{fig:mesh-refinement}
\end{figure}

\begin{figure}[ht!]
    \centering
    \subfloat[$\tau=0.1$]{
    \includegraphics[width=0.6\linewidth]{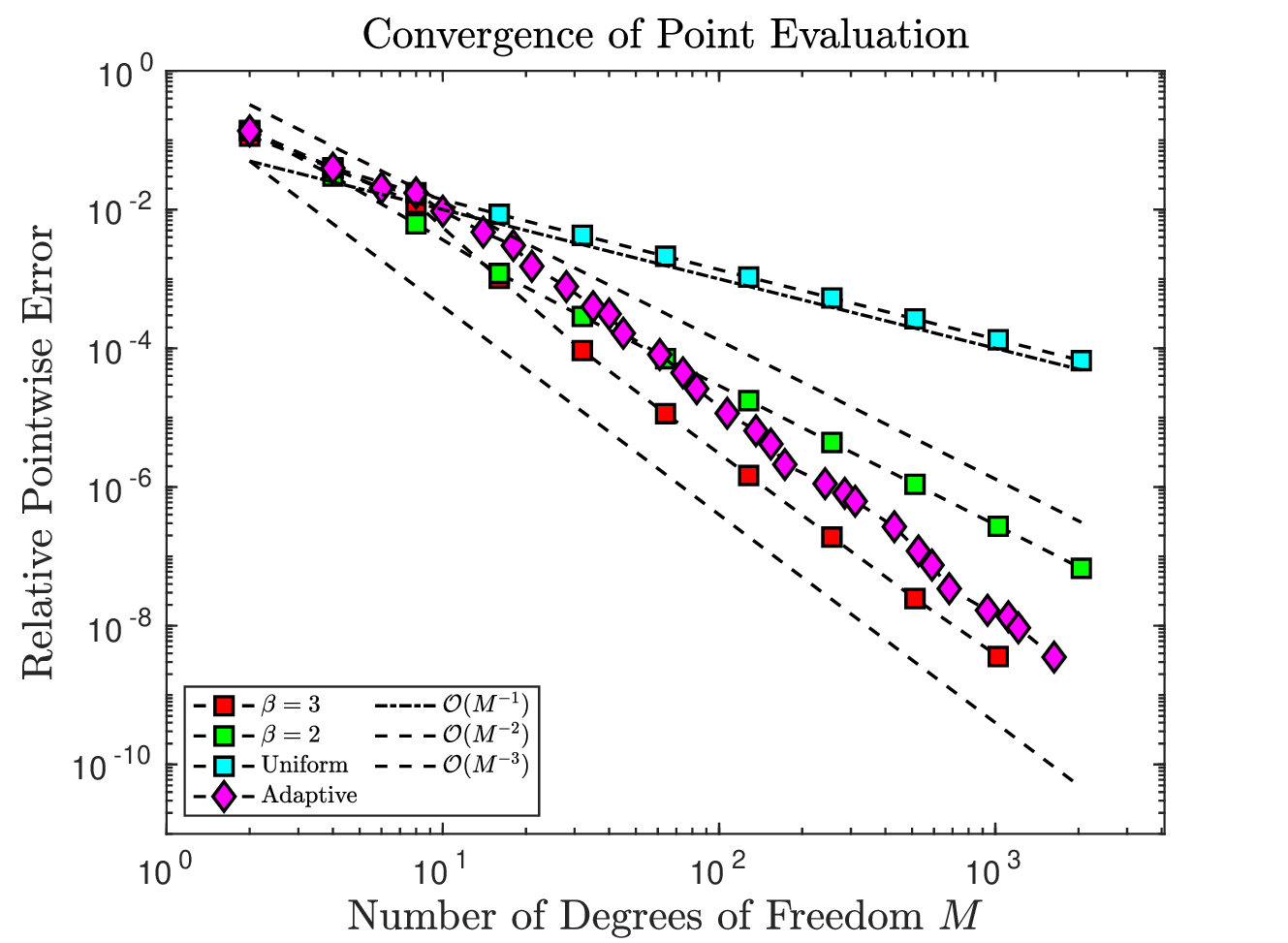}}
    \subfloat[$\tau=0.1$]{
    \includegraphics[width=0.6\linewidth]{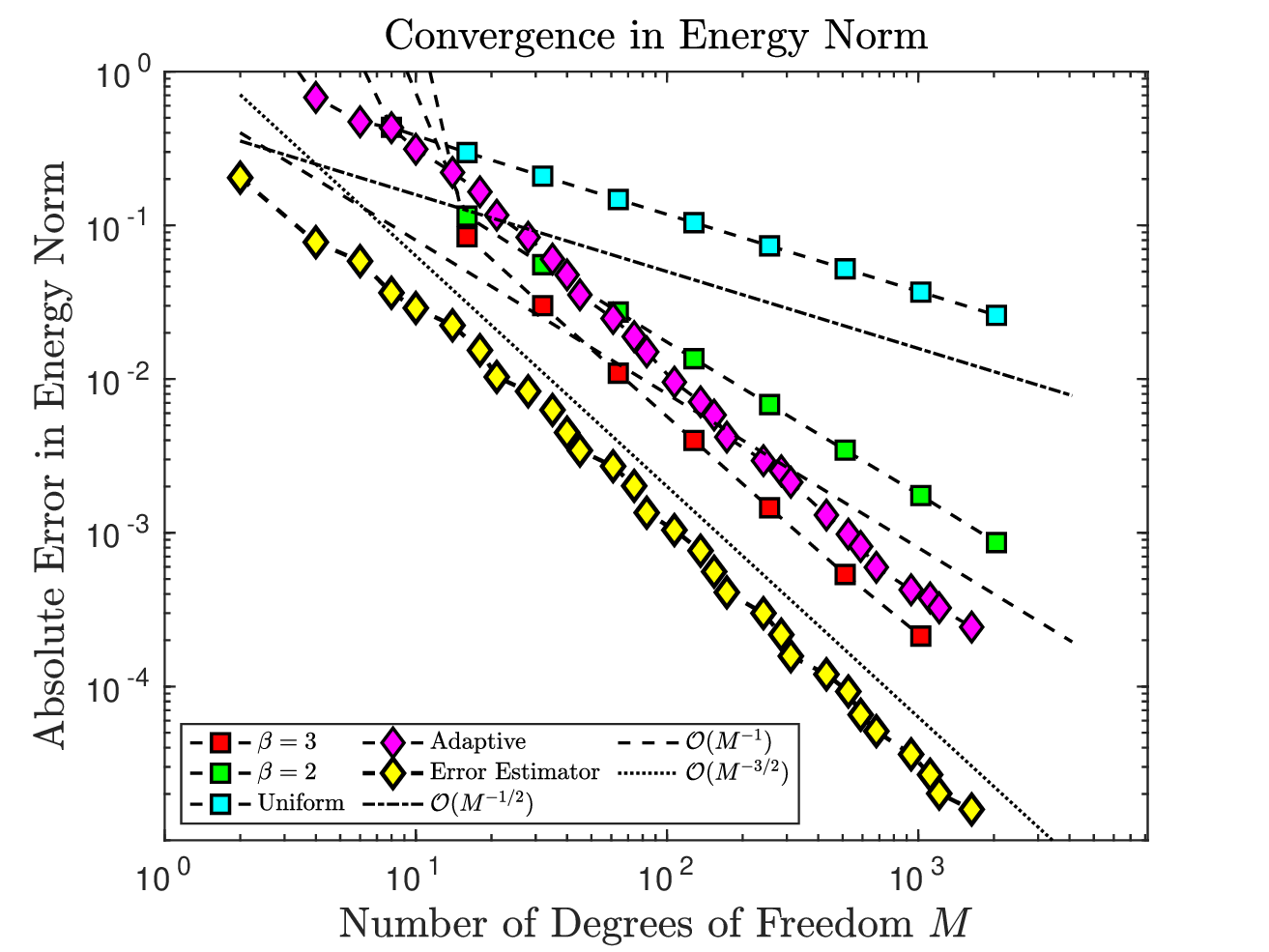}}
    \caption{Scattering at a flat screen, $d = (-\tfrac{\sqrt{3}}{2}, \tfrac{1}{2})$. }
    \label{fig:CQ1}
\end{figure}

\begin{figure}[ht!]
    \centering
    \subfloat[]{
    \includegraphics[width=0.5\linewidth]{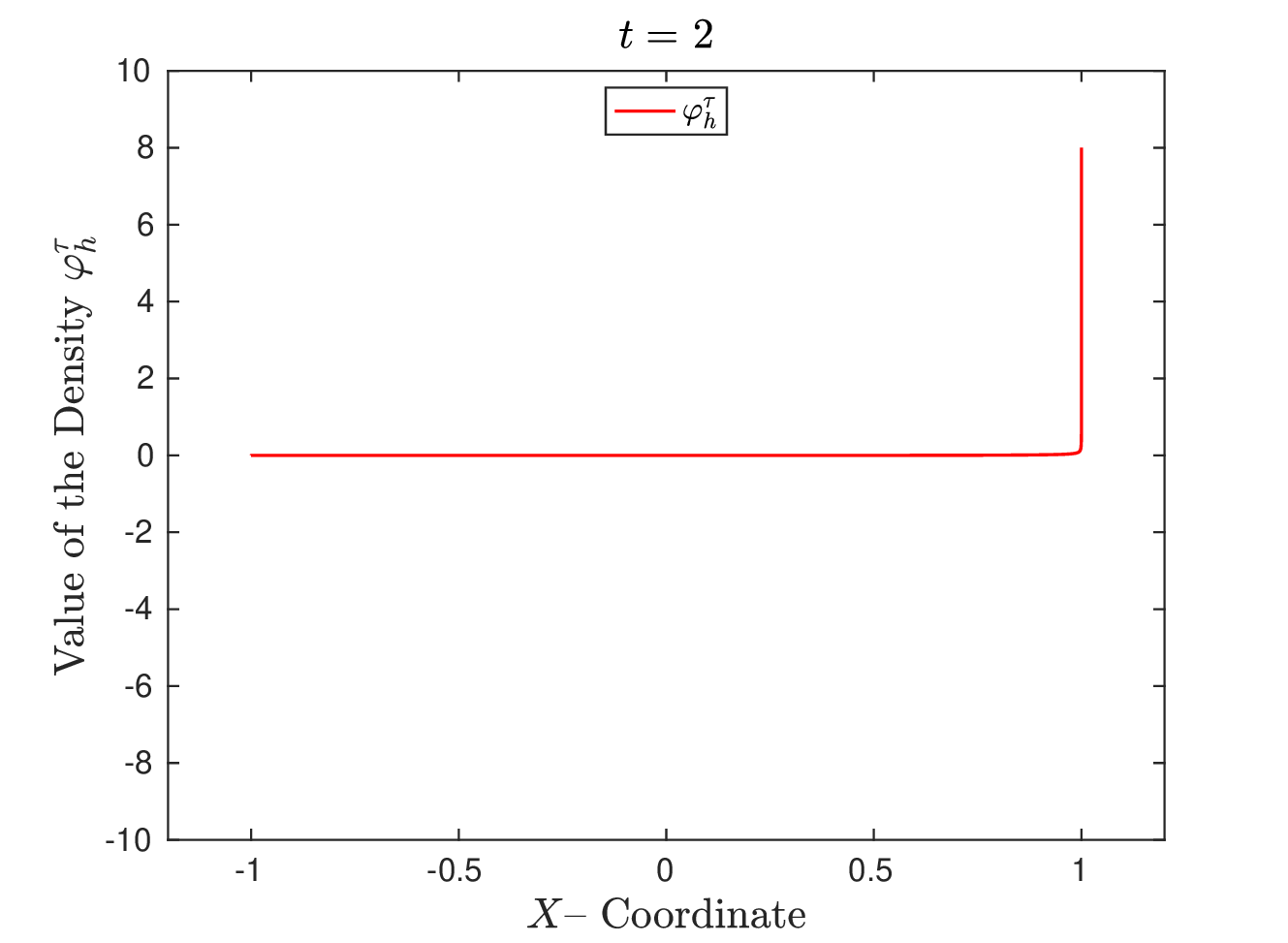}}
    \subfloat[]{
    \includegraphics[width=0.5\linewidth]{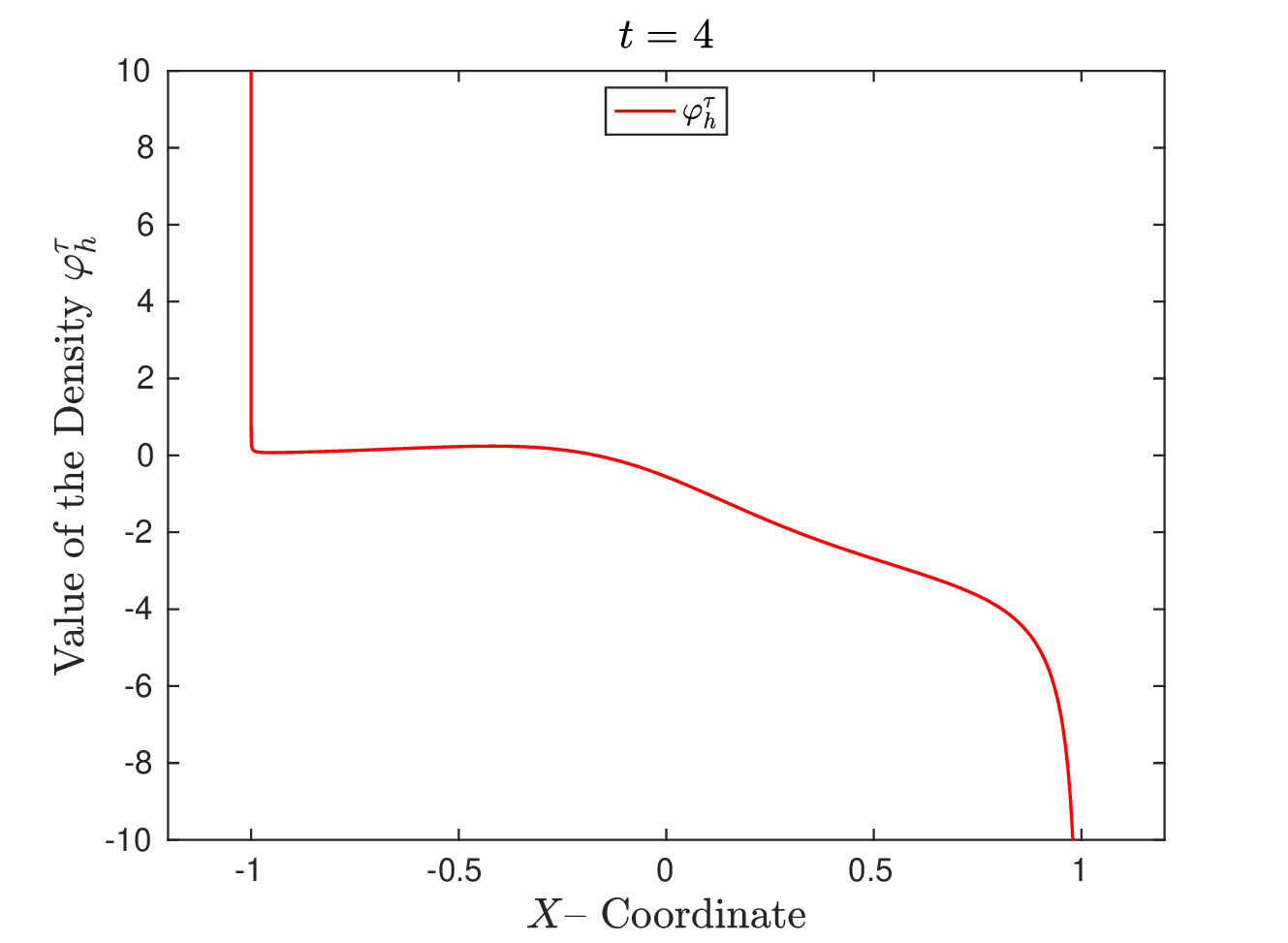}}\\
    \subfloat[]{
    \includegraphics[width=0.5\linewidth]{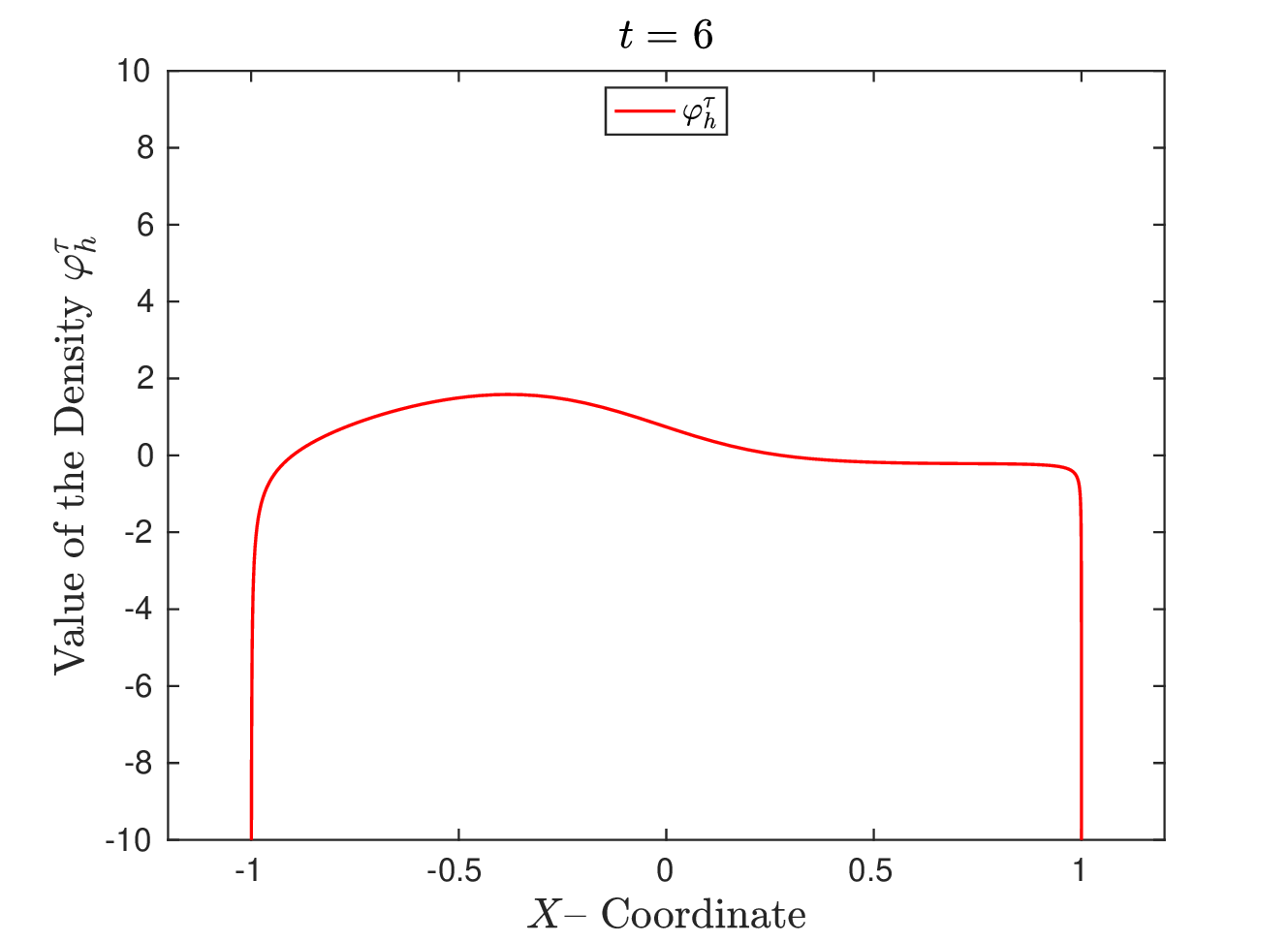}}
    \subfloat[]{
    \includegraphics[width=0.5\linewidth]{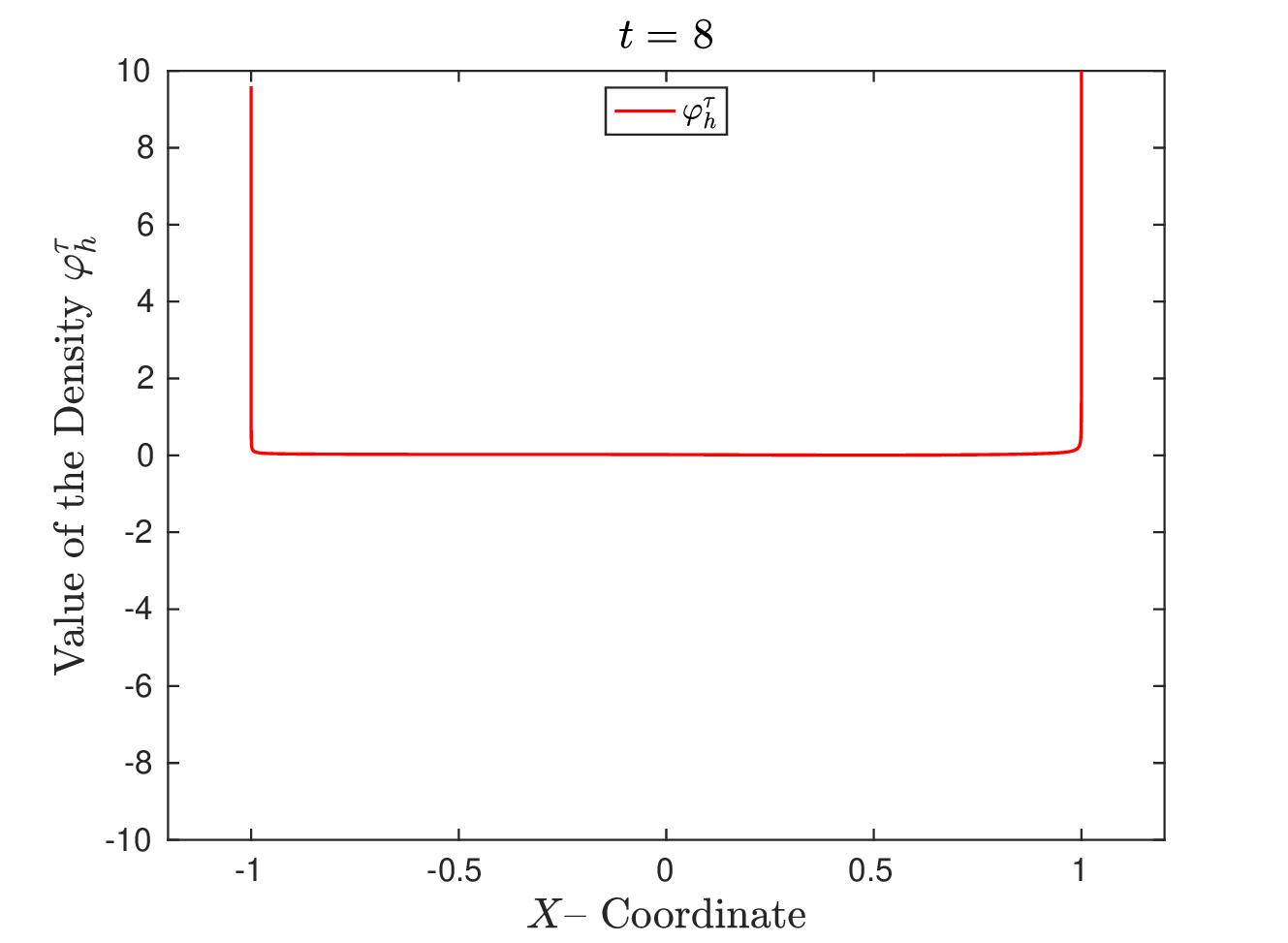}}
    \caption{Snapshots of the density $\varphi_h^{\tau}$: scattering at a flat screen.  }
    \label{fig:snapshots-flat}
\end{figure}

\subsection{Wedge}
We solve the problem at a wedge geometry $\Gamma \coloneqq \Gamma_0 \cup \Gamma_1, $ where $\Gamma_0 \coloneqq (0, 1)\times \{0\}$ and $\Gamma_1 \coloneqq \{0\} \times (0, 1)$. Results for convergence of point evaluation and convergence of densities in energy norm \eqref{eq:energy-norm} are presented in Figure \ref{fig:CQ2}. As before, we observe that our adaptive refinement performs as the optimal $3$--graded mesh refinement. 

\begin{figure}[ht!]
    \centering
    \subfloat[$\tau=0.1$]{
    \includegraphics[width=0.6\linewidth]{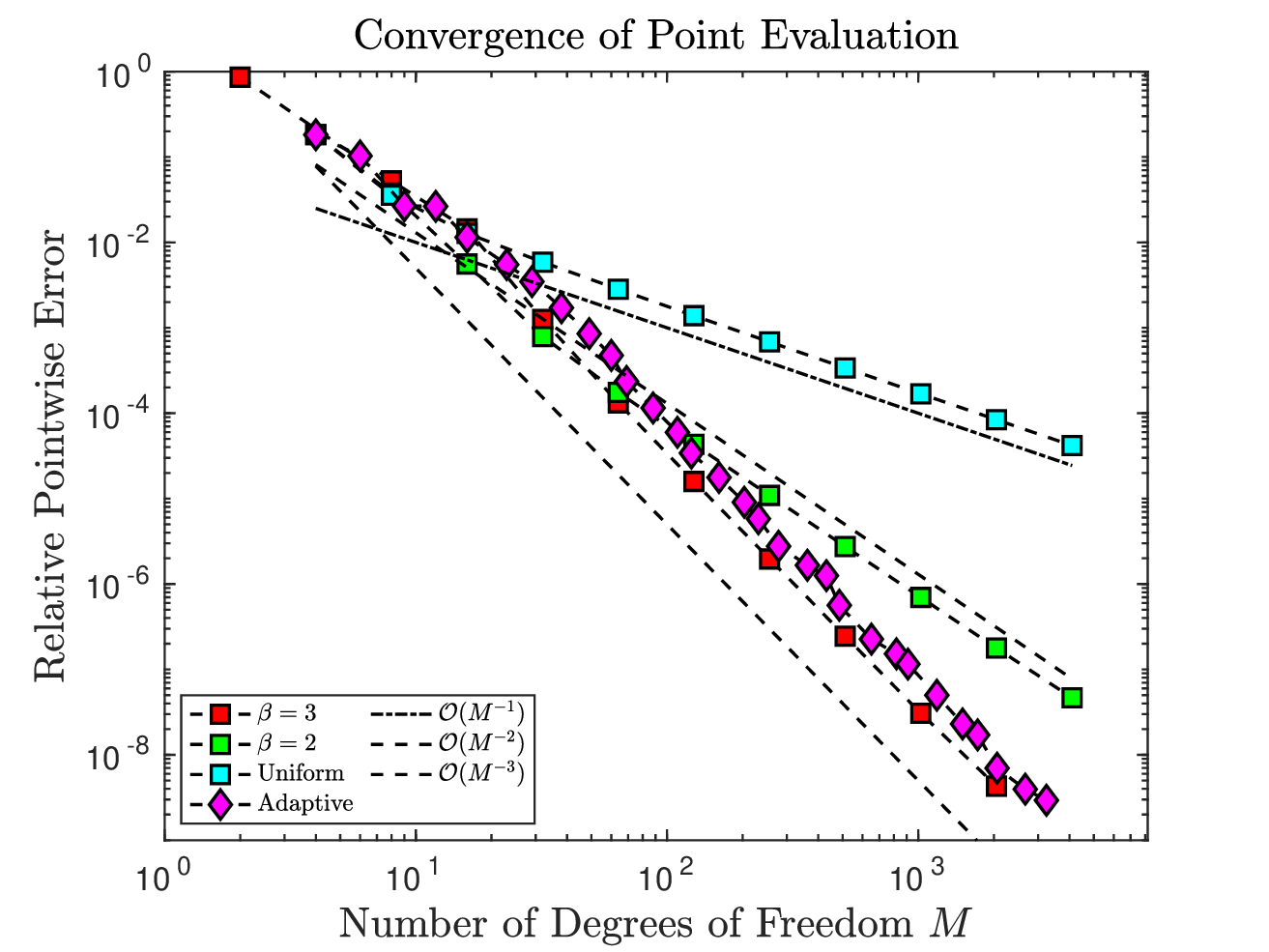}}
    \subfloat[$\tau=0.1$]{
    \includegraphics[width=0.6\linewidth]{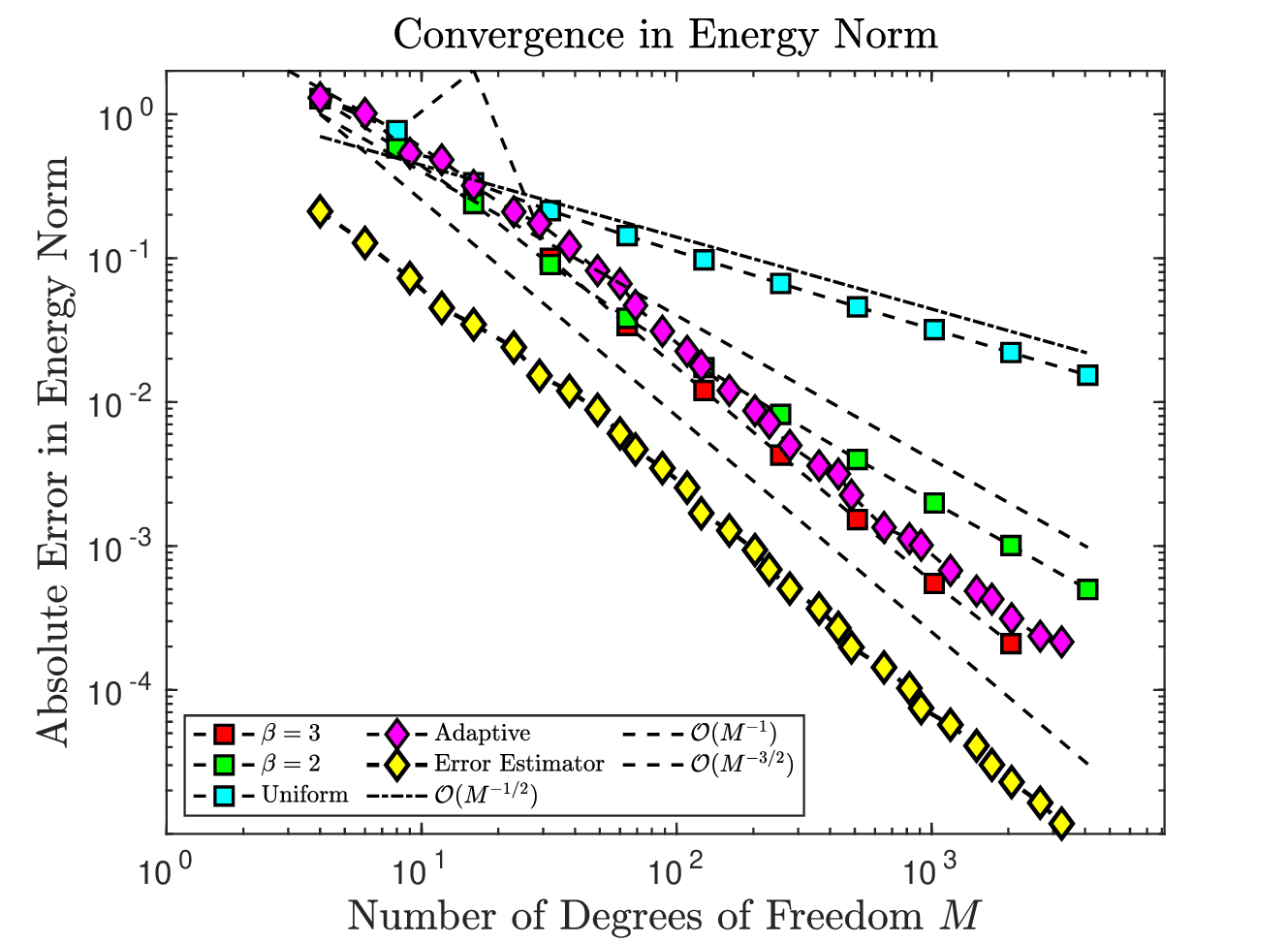}}
    \caption{Scattering at a wedge, $d = (-\tfrac{\sqrt{3}}{2}, \tfrac{1}{2})$. }
    \label{fig:CQ2}
\end{figure}

\subsection{Trapping Geometry}

We solve the problem at a screen with the trapping geometry $$\Gamma \coloneqq \Gamma_0 \cup \Gamma_1\cup \Gamma_2, $$ where each of the segments is defined by
$$
\begin{aligned}
&\Gamma_0 \coloneqq  \{0\} \times (-1,1), \\
&\Gamma_1 \coloneqq \{ \mathbf{x}\in \mathbb{R}^2 : \mathbf{x} = \lambda\mathbf{x}_{10} + (1-\lambda)\mathbf{x}_{11}, \ \lambda \in [0,1] \},\\
&\Gamma_2 \coloneqq \{ \mathbf{x}\in \mathbb{R}^2 : \mathbf{x} = \lambda\mathbf{x}_{20} + (1-\lambda)\mathbf{x}_{21}, \ \lambda \in [0,1] \},
\end{aligned}
$$ 
with $\mathbf{x}_{10} = (0, 1), \ \mathbf{x}_{11} = (\tfrac{\sqrt{3}}{2}, \tfrac{1}{2}), \ \mathbf{x}_{20} = (0, -1),$ and $ \ \mathbf{x}_{21} = (\tfrac{\sqrt{3}}{2}, -\tfrac{1}{2})$. The geometry is visualized in Figure~\ref{fig:snapshots}.

Results for convergence of point evaluation and convergence of densities in energy norm \eqref{eq:energy-norm} are presented in Figure \ref{fig:CQ3}. The additional complexity of the solution induced by the trapping does not seem to affect the performance of the scheme, as the adaptive method still converges with the optimal rate. Snapshots of the solution at times $t = 1, 4, 7$ and $10$ are visualized in Figure \ref{fig:snapshots}.

\begin{figure}[ht!]
    \centering
    \subfloat[$\tau=0.1$]{
    \includegraphics[width=0.6\linewidth]{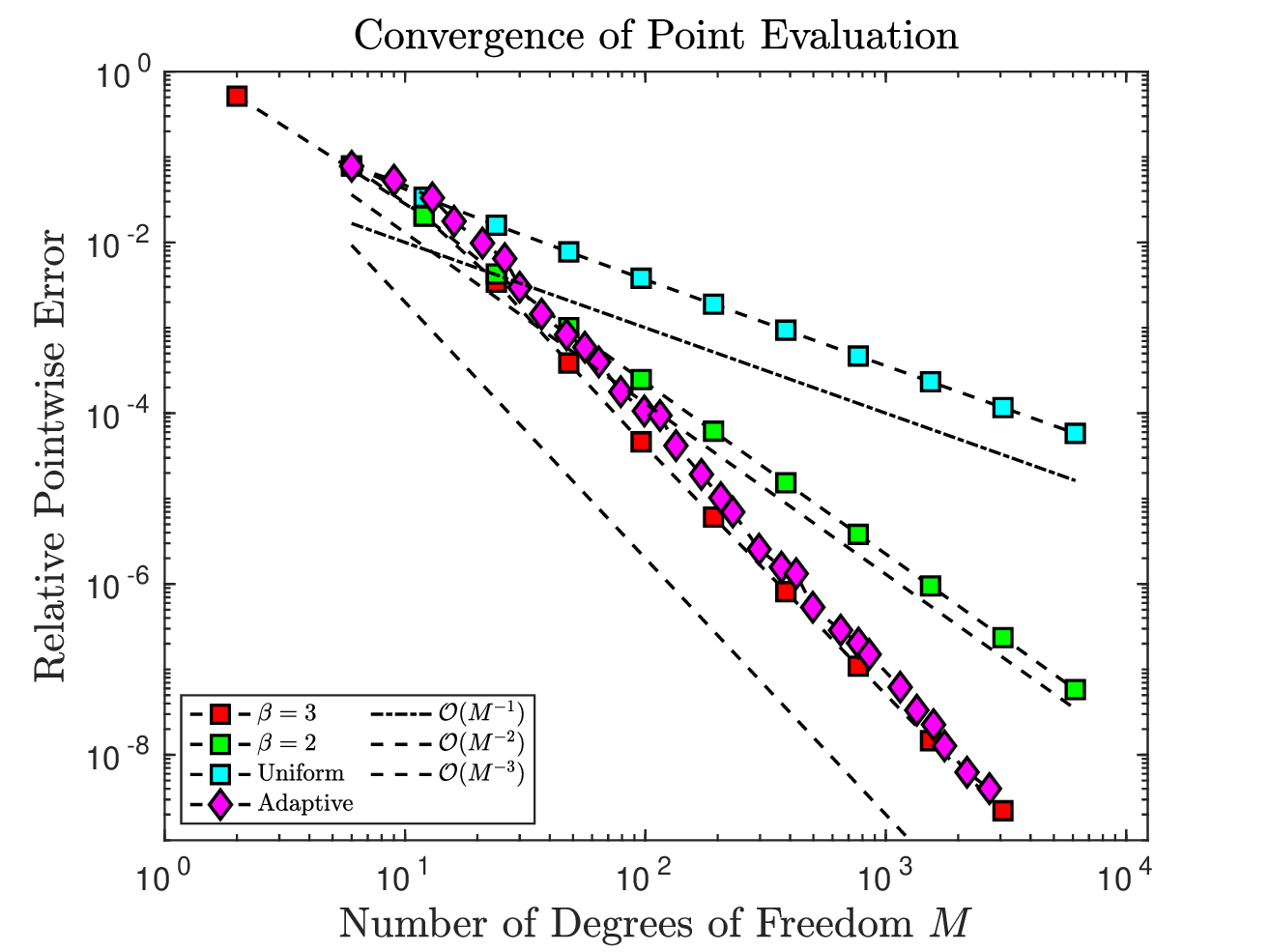}}
    \subfloat[$\tau=0.1$]{
    \includegraphics[width=0.6\linewidth]{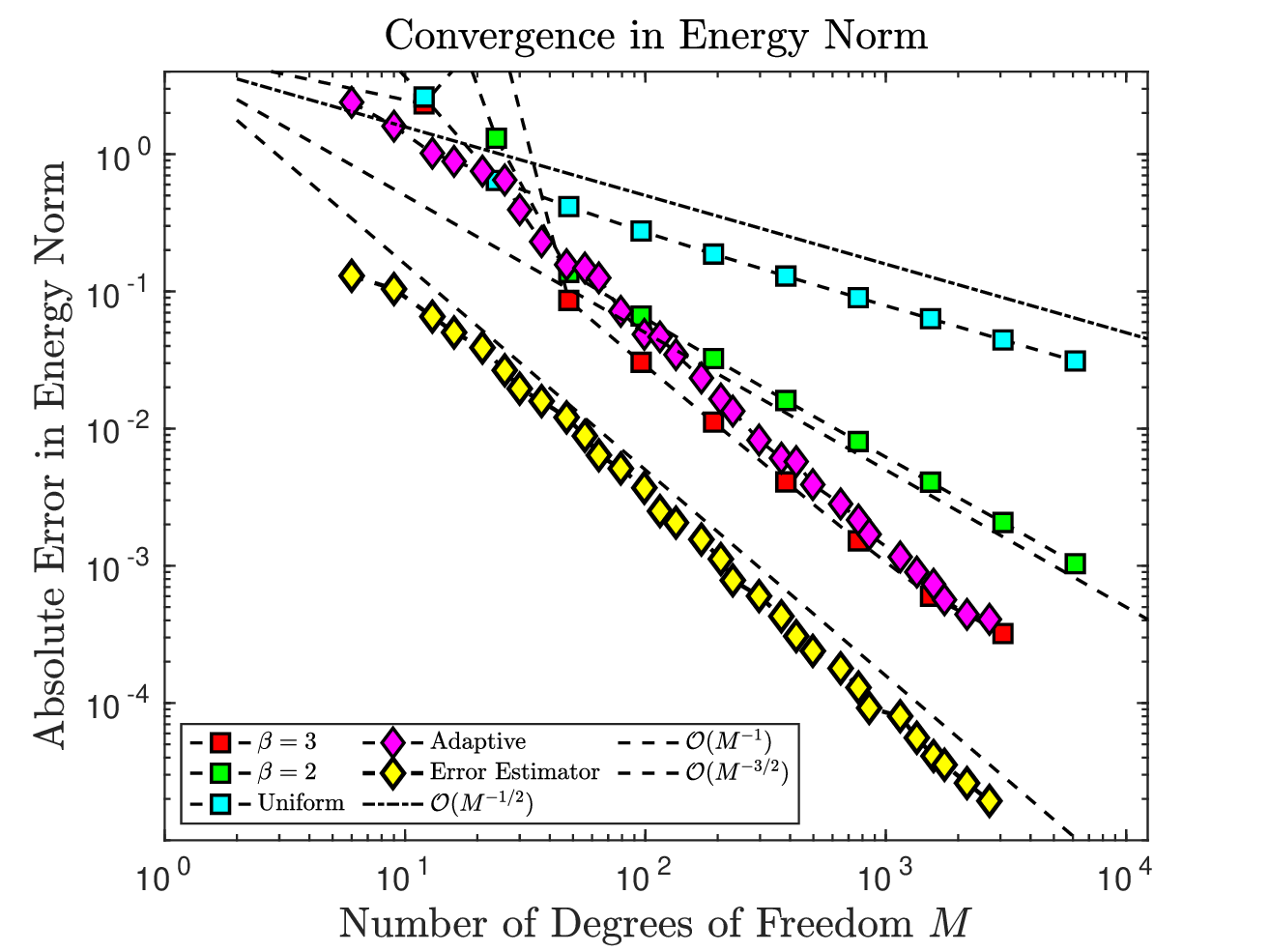}}
    \caption{Scattering at a screen with trapping geometry, $d = (-\tfrac{\sqrt{3}}{2}, \tfrac{1}{2})$. }
    \label{fig:CQ3}
\end{figure}

\begin{figure}[ht!]
    \centering
    \subfloat[]{
    \includegraphics[width=0.5\linewidth]{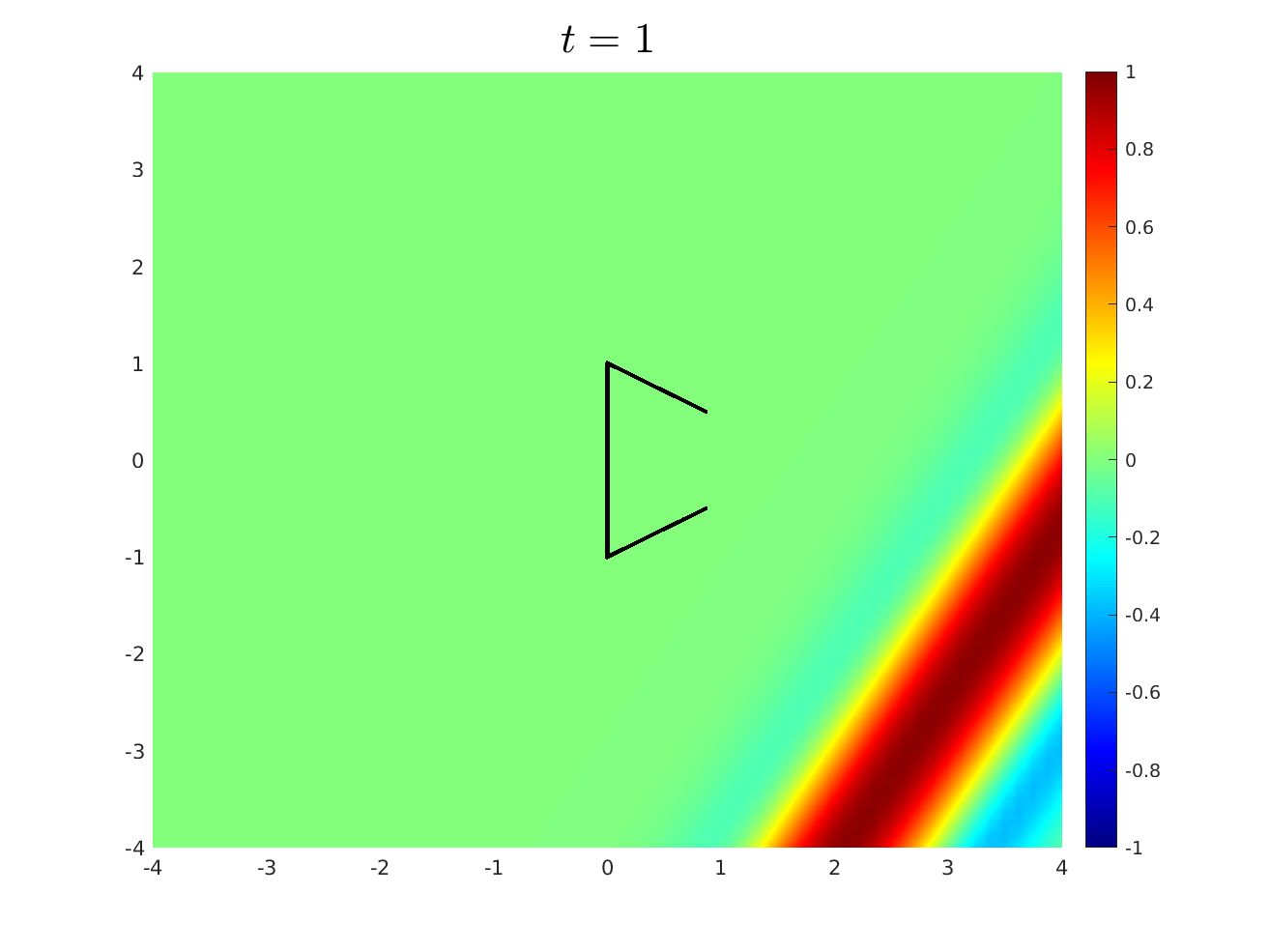}}
    \subfloat[]{
    \includegraphics[width=0.5\linewidth]{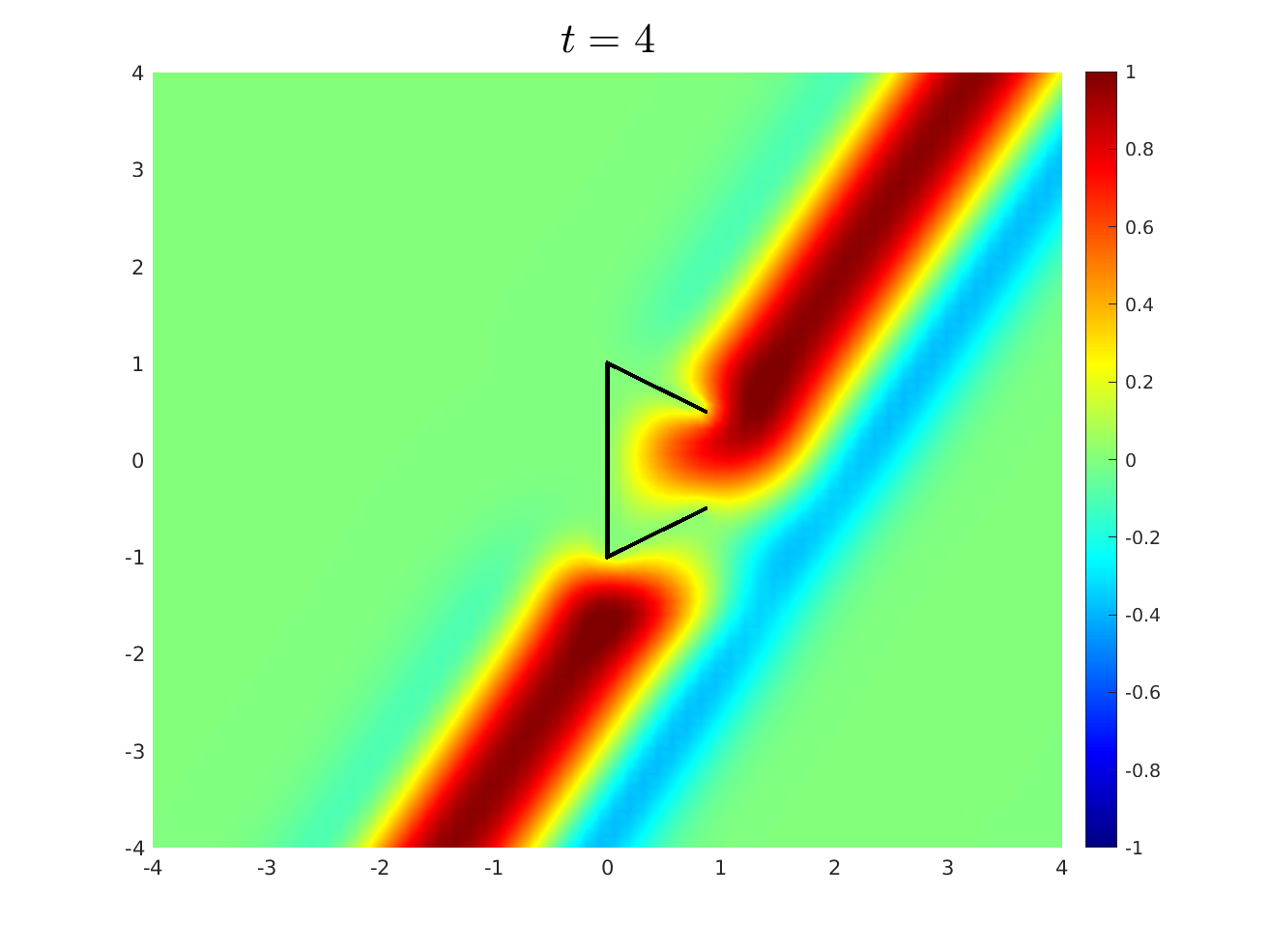}}\\
    \subfloat[]{
    \includegraphics[width=0.5\linewidth]{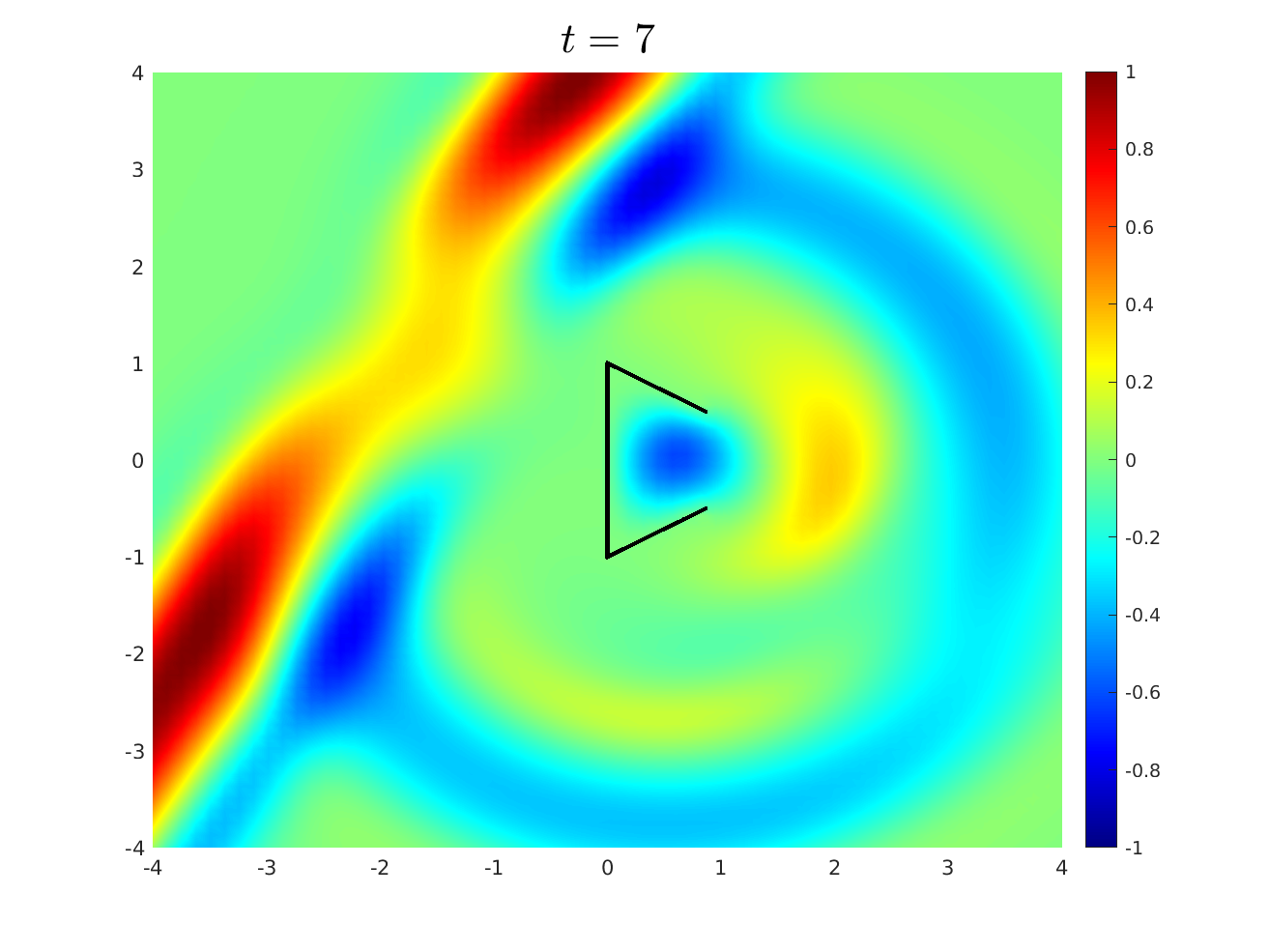}}
    \subfloat[]{
    \includegraphics[width=0.5\linewidth]{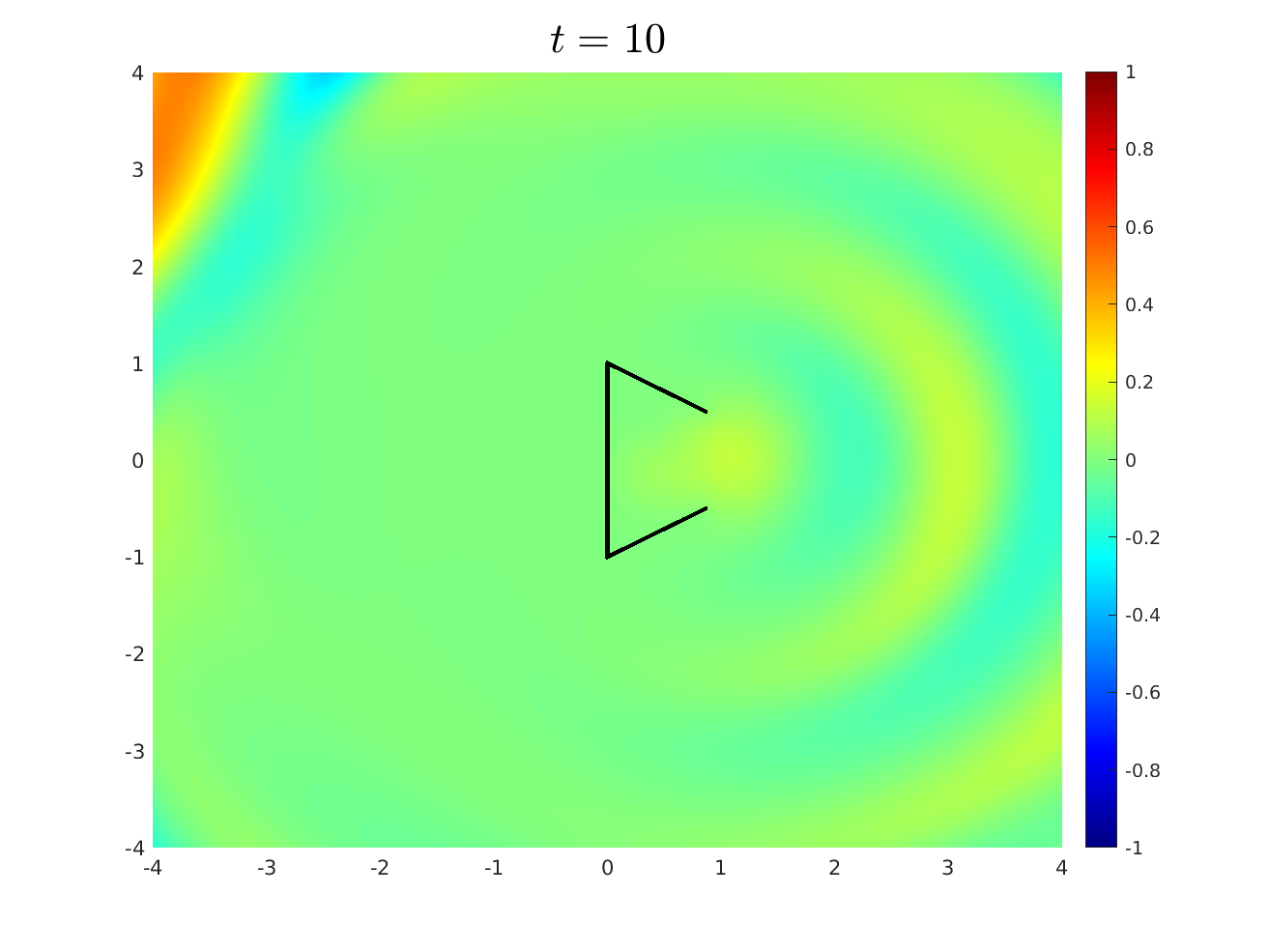}}
    \caption{Snapshots of the solution $u_h^{\tau}$: scattering at a screen with trapping geometry. }
    \label{fig:snapshots}
\end{figure}

\subsection{Time discretization error}
In this section we study the convergence of the time-discretization. We consider the setting of a flat screen from Section \ref{sec:segment} refining with the adaptive algorithm in space. The results are shown in Figure \ref{fig:segment-time}. As a benchmark we use the solution with $\tau = \tfrac{T}{N} =\tfrac{10}{6400} = 0.0015625$.

We observe that the standard formulation ($\eta \equiv 0$) leads to a reduced convergence rate of 2 for the density $\varphi_h^{\tau}$, due to the results from Lemma \ref{lem:RK-CQ}. The modified formulation from \eqref{eq:time-discr-shift} with $\eta = \tfrac{1}{2T}$ shows the classical order of convergence $2m-1 = 3$ for the two-stage Radau IIA method, as Theorem \ref{th:full} predicts. For both cases, point evaluations show the full classical order of convergence.\\

\begin{figure}[ht!]
    \centering
    \includegraphics[width=0.7\linewidth]{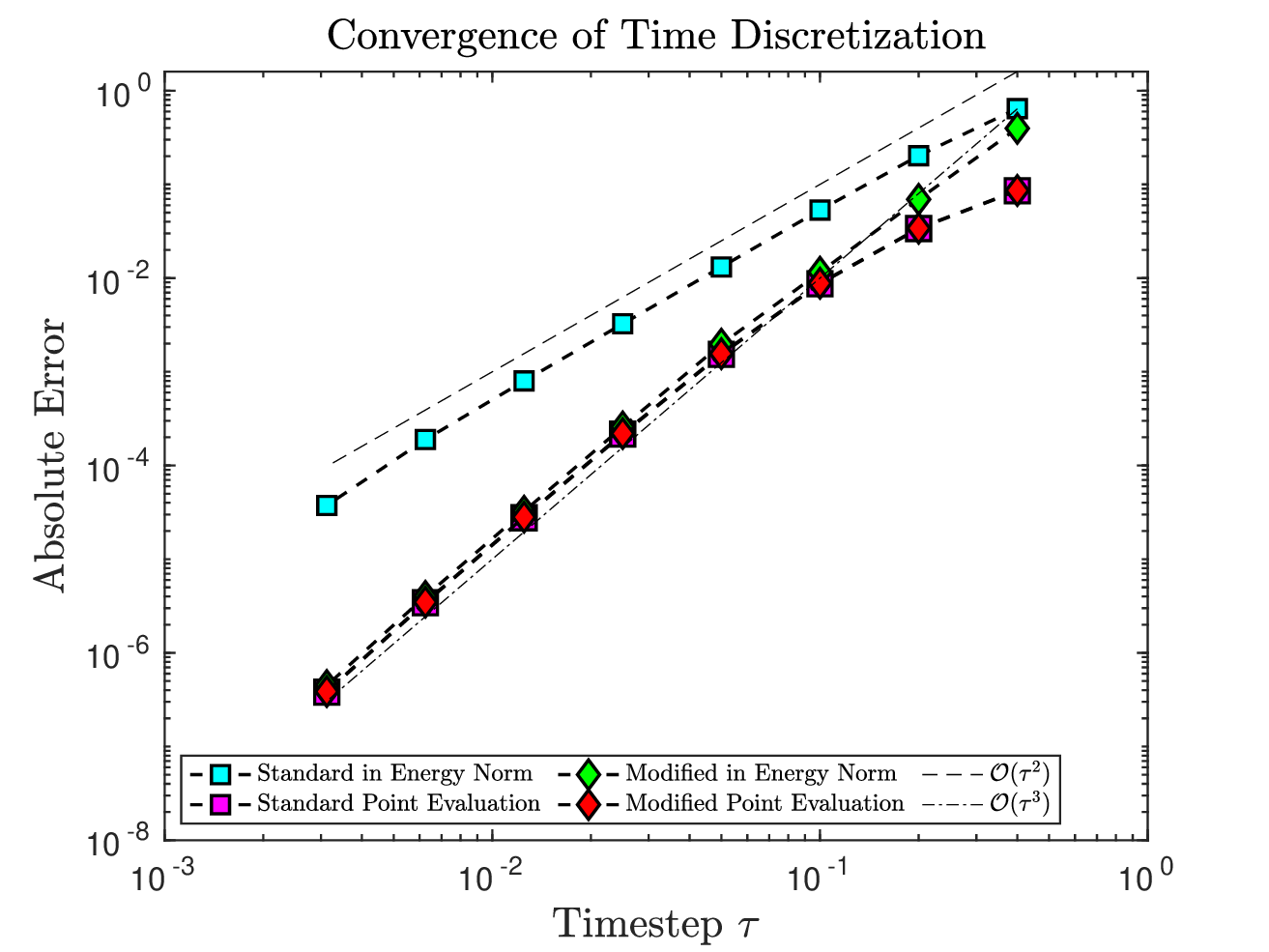}
    \caption{Convergence in time of the standard and modified formulations.}
    \label{fig:segment-time}
\end{figure}

Finally, we investigate the efficiency of the a posteriori error estimate, i.e. the ratio $\mathrm{error}/\mathrm{estimator}$ in Figure \ref{fig:segment-time-error}. The error (and estimator) for $M$ spatial degrees of freedom is written as $e_M$ ($\eta_M$). The computed ratios $e_M/(\eta_M + C \tau^2)$ remain bounded, both as $M\rightarrow \infty$ and the $\tau \rightarrow 0$. We conclude therefore that the efficiency constant remains bounded, even though this cannot be shown for multistage methods, with the current theory. For the constant in the ratio that estimates the time-discretization error, we used $C = 5.$ \\

Similar results are obtained for the modified formulation with a shift of $\tfrac{1}{2T}$. The results are shown in Figure \ref{fig:segment-time-error3}. Here, we empirically observe a convergence rate of 2.8, close to the classical convergence rate of 3. We plot $e_M/(\eta_M + C \tau^{2.8})$ with a constant $C = 9$, and observe how most of the values remain around 1. It is important to mention that the mesh obtained by the adaptive algorithm contains elements of very small size towards the endpoints of the segment. This leads to ill-conditioned matrices during the computation of the reference solution. Further experiments require the use of preconditioning, and a higher number of quadrature points.

\begin{figure}[ht!]
    \centering
    \subfloat[Convergence of adaptive scheme.]{
    \includegraphics[width=0.6\linewidth]{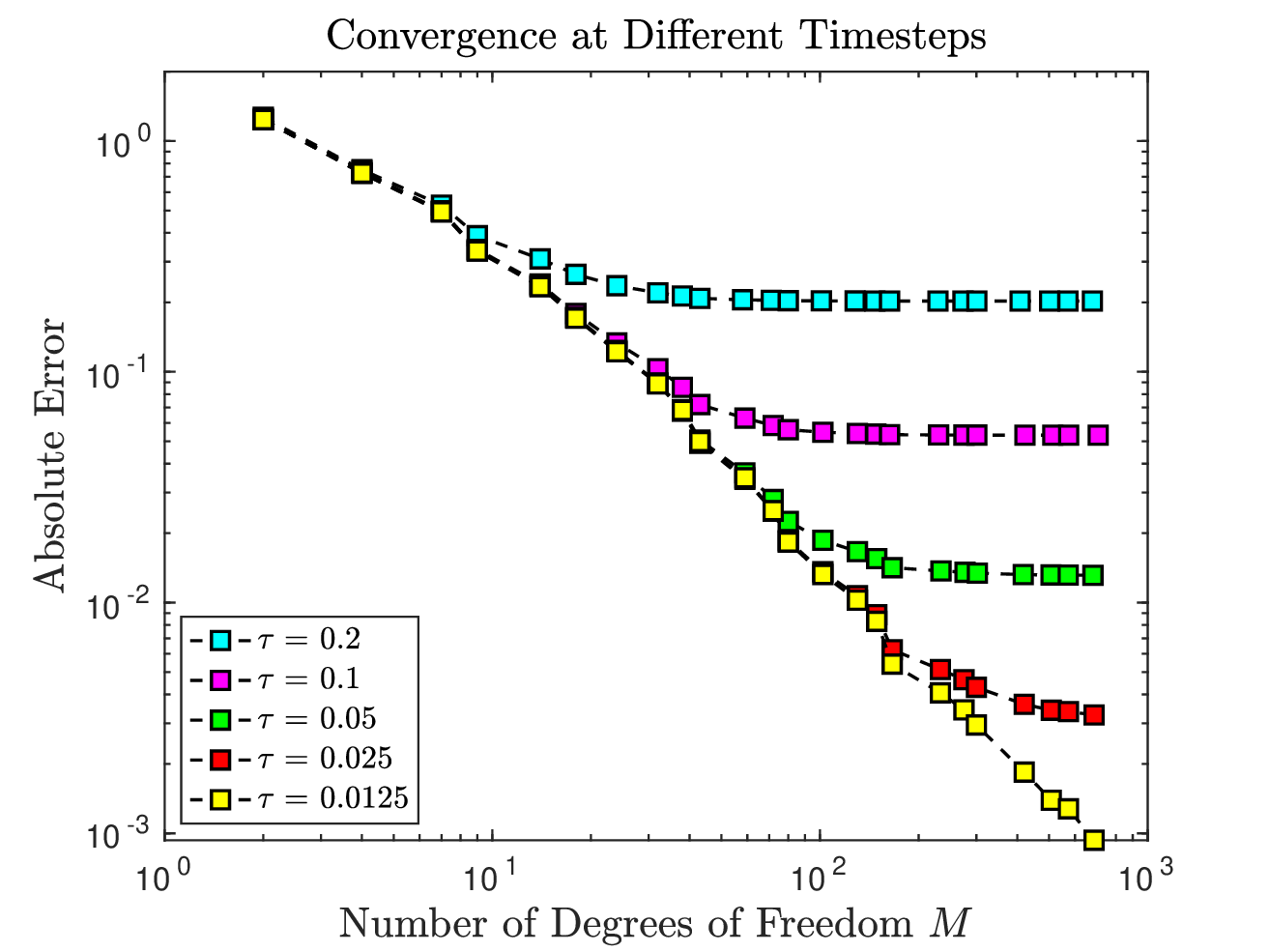}}
    \subfloat[Computing the efficiency of error estimator]{ \includegraphics[width=0.6\linewidth]{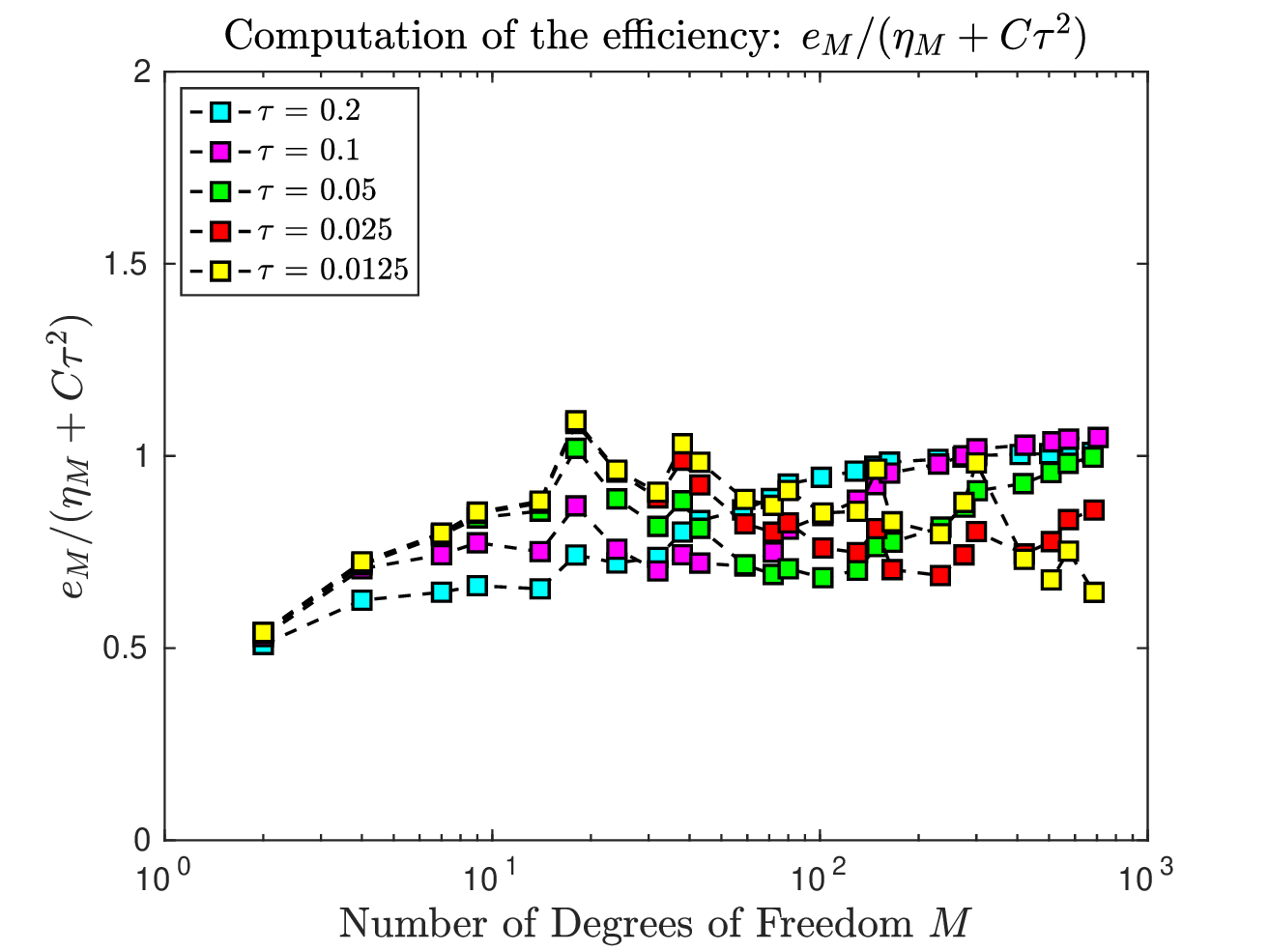}}
    \caption{Error and efficiency for different time steps. Standard formulation.}
    \label{fig:segment-time-error}
\end{figure}

\begin{figure}[ht!]
    \centering
    \subfloat[Convergence of adaptive scheme.]{
    \includegraphics[width=0.6\linewidth]{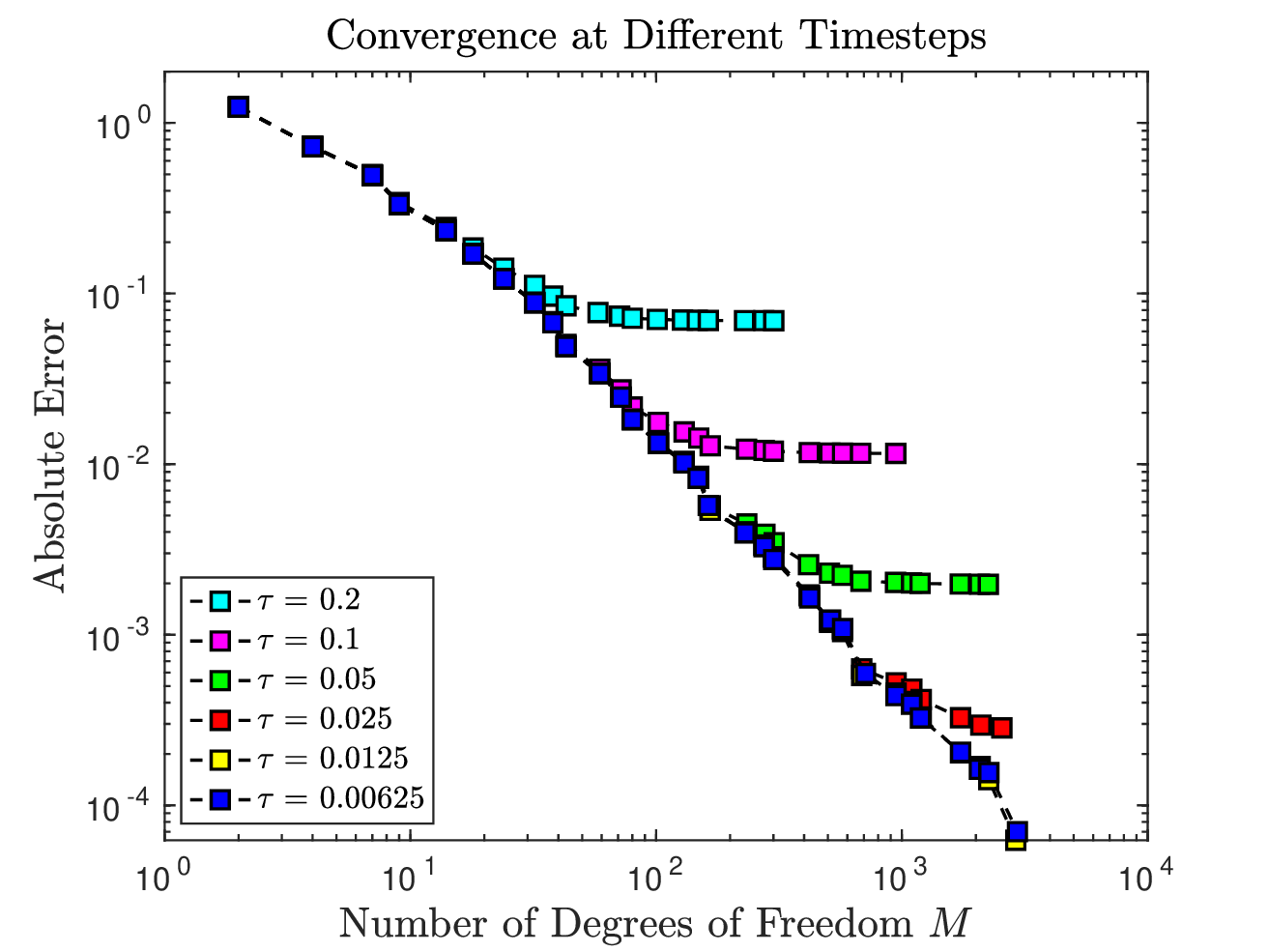}}
    \subfloat[Computing the efficiency of error estimator]{ \includegraphics[width=0.6\linewidth]{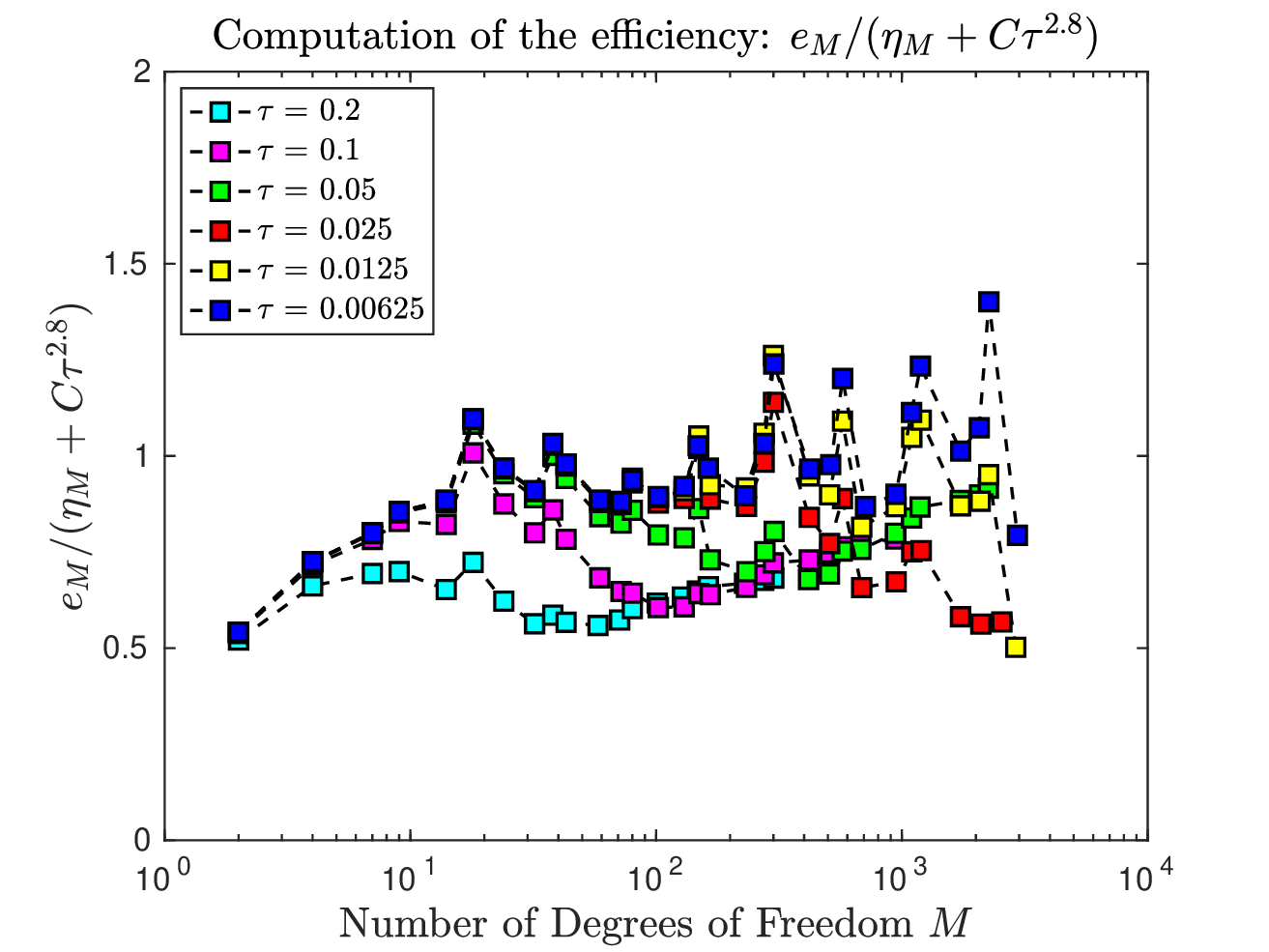}}
    \caption{Error and efficiency for different time steps. Modified formulation.}
    \label{fig:segment-time-error3}
\end{figure}

\section*{Acknowledgements}

We thank C.~Schwab for initiating early discussions, which led to the inception of this project.

	\bibliographystyle{abbrv}
	\bibliography{Lit}
\end{document}